\newtheorem{theorem}{Theorem}[section]
\newtheorem{example}[theorem]{Example}
\newtheorem{lemma}[theorem]{Lemma}
\newtheorem{proposition}[theorem]{Proposition}
\renewenvironment{proof}[1][Proof]{\noindent\textbf{#1.} }{\ \rule{0.5em}{0.5em}}
\begin{document}

\title[A Counter Example for Homogeneity Conjecture]{A Counter Example for  Homogeneity Conjecture}
\author{Ming Xu}
\address[Ming Xu] {School of Mathematical Sciences,
Capital Normal University,
Beijing 100048,
P. R. China}
\email{mgmgmgxu@163.com}
\author{Shaoqiang Deng}
\address[Shaoqiang Deng] {School of Mathematical Sciences and LPMC, Nankai University, Tianjin 300071, P. R. China}
\email{dengsq@nankai.edu.cn}
\thanks{S. Deng is the corresponding author. This paper is supported by National Natural Science Foundation of China (No. 12131012).}
\begin{abstract}
We  construct a  counter example to show that the Homogeneity Conjecture,  proposed by J.A. Wolf in 1960, is not true.  To be precise, we prove that on the group $\mathrm{Sp}(2)$, there exists a left invariant Riemannian metric and a cyclic subgroup
$\Gamma$ of order $(2n+1)$, such that the left translation of each element of $\Gamma$ on $\mathrm{Sp}(2)$ is a Clifford Wolf translation, and
the Riemannian quotient $\Gamma\backslash \mathrm{Sp}(2)$ is not homogeneous.

\noindent
\textbf{Mathematics Subject Classification (2010)}: 22E46, 53C30.
\vbox{}
\\
\textbf{Key words}: bi-invariant metric, CK-vector field, CW-translation, Homogeneity Conjecture, left invariant metric

\end{abstract}

\maketitle

\section{Introduction}
It is our goal of this article to solve a long standing open problem in Riemannian geometry.
Recall that an isometry $\rho$ of a Riemannian manifold $M$ is called a {\it Clifford Wolf translation} (or  a {\it CW-translation}) if $d(x,\rho (x))$ is a constant function on $M$, where $d$ is the distance function of $M$. In other words, an isometry of a Riemmanian manifold is a CW-translation if it moves each point  the same distance. The significance of CW-translations was  revealed by the following fundamental observation (see Theorems 1 and 2 in \cite{Wo1960}):

\begin{proposition}\label{prop-1}
 Let $\Gamma$ be a discontinuous group of fixed point free isometries of the connected Riemannian manifold $M$, such that the quotient map from $M$ to $\Gamma\backslash M$ is locally isometric,  and the induced metric on $M'=\Gamma\backslash M$ is homogeneous. Then each
element in $\Gamma$ is a CW-translation on $M$, and $M$ is also  homogeneous.
\end{proposition}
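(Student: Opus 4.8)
The plan is to prove Proposition~\ref{prop-1} by analyzing the relationship between the quotient map $\pi\colon M\to M'=\Gamma\backslash M$ and the homogeneity of $M'$. The strategy is to first establish that the quotient map is a Riemannian covering, then use the transitive isometry group of $M'$ to lift isometries upstairs to $M$, thereby witnessing both the homogeneity of $M$ and the CW-property of each $\gamma\in\Gamma$.

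First I would record the structural facts furnished by the hypotheses. Since $\Gamma$ acts by fixed-point-free isometries, discontinuously, and the quotient map is locally isometric, the map $\pi$ is a Riemannian covering with deck transformation group $\Gamma$. In particular $M$ carries a Riemannian metric for which $\Gamma\subset\mathrm{Isom}(M)$ and $\pi$ is a local isometry, so distances and geodesics project and lift compatibly. I would also fix, for any two points $p,q\in M'$, an isometry $\varphi$ of $M'$ with $\varphi(p)=q$, guaranteed by the assumed homogeneity of $M'$.

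Next I would carry out the lifting argument, which is the heart of the proof. Given an isometry $\varphi$ of $M'$ and a chosen lift $\tilde\varphi$ of the continuous map $\varphi\circ\pi$ through the covering $\pi$ (using simple connectivity only if available, or more robustly the covering homotopy / lifting criterion applied to $\varphi\circ\pi\colon M\to M'$), I would show $\tilde\varphi$ is an isometry of $M$: it is a local isometry because it locally equals a composition of the local isometries $\pi$ and $\varphi$ inverted against a local section, and a surjective local isometry of a complete connected manifold is an isometry. Letting $\varphi$ range over the transitive family then shows $\mathrm{Isom}(M)$ acts transitively on $M$, which gives the homogeneity of $M$. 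Here one must check that lifts of isometries normalize $\Gamma$ in $\mathrm{Isom}(M)$, i.e.\ $\tilde\varphi\,\Gamma\,\tilde\varphi^{-1}=\Gamma$, so that the two covering structures are compatible; this follows from the uniqueness of the covering $\pi$ and the fact that $\Gamma$ is exactly the deck group.

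Finally, for the CW-property, I would use that $M'$ is homogeneous together with the compatibility of the lifted isometries with $\Gamma$. For a fixed $\gamma\in\Gamma$ and any point $x\in M$, the displacement $d_M(x,\gamma x)$ equals the length of a shortest geodesic; projecting to $M'$ and using an isometry $\varphi$ of $M'$ moving $\pi(x)$ to any other $\pi(y)$, I would lift $\varphi$ to $\tilde\varphi$ with $\tilde\varphi\gamma\tilde\varphi^{-1}=\gamma'\in\Gamma$, and show $d_M(x,\gamma x)=d_M(\tilde\varphi x,\gamma'\tilde\varphi x)$. A counting or connectedness argument (using that the displacement function is continuous and that the conjugation action of the lifted isometries on the finite-order or discrete group $\Gamma$ can be controlled) then forces $\gamma'=\gamma$ and hence that $d_M(\cdot,\gamma\,\cdot)$ is constant, i.e.\ $\gamma$ is a CW-translation. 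The main obstacle I anticipate is this last step: one must argue carefully that conjugation by the lifts sends $\gamma$ to $\gamma$ itself (not merely to some element of $\Gamma$), which is where the precise structure of $\Gamma$ as the deck group and the continuity of the displacement function must be combined, rather than any single clean formula.
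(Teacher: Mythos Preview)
The paper does not supply its own proof of Proposition~\ref{prop-1}; it simply records the statement and cites Theorems~1 and~2 of \cite{Wo1960}. So there is no proof in the paper to compare against, and your proposal has to be judged on its own merits.

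Your outline follows the classical route and is sound up to the final step, which you yourself flag as the obstacle: after lifting an isometry $\varphi$ of $M'$ to $\tilde\varphi$ on $M$, you only obtain $\tilde\varphi\gamma\tilde\varphi^{-1}=\gamma'\in\Gamma$, and you need $\gamma'=\gamma$ to conclude that $x\mapsto d(x,\gamma x)$ is constant. Your gesture toward ``a counting or connectedness argument'' is aimed in the right direction, but as written this is a genuine gap: continuity of the displacement function alone cannot force $\gamma'=\gamma$, since distinct elements of $\Gamma$ may well share the same displacement at a given point. (A side remark: your claim that ``a surjective local isometry of a complete connected manifold is an isometry'' is false in general --- such a map is only a Riemannian covering --- but the lifts you build are bijective for other reasons, so this does not damage the argument.)

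The missing idea is the following. Let $N$ be the group of all lifts to $M$ of isometries of $M'$; then $\Gamma\triangleleft N$ is discrete and $N/\Gamma\cong\mathrm{Isom}(M')$. Because $\Gamma$ is discrete and normal, the identity component $N_0$ \emph{centralizes} $\Gamma$: for each $\gamma$ the map $g\mapsto g\gamma g^{-1}$ is continuous from the connected group $N_0$ into the discrete group $\Gamma$, hence constantly equal to $\gamma$. Moreover the quotient map $N\to N/\Gamma$ carries $N_0$ onto $I_0(M')$, which acts transitively on $M'$ since $M'$ is connected and homogeneous; it follows that every $N_0$-orbit in $M$ is open (it has full dimension because $\pi$ is a local diffeomorphism), and connectedness of $M$ then forces $N_0$ to act transitively on $M$. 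Now $d_\gamma(x)=d(x,\gamma x)$ is $N_0$-invariant because $N_0$ centralizes $\Gamma$, hence constant, so $\gamma$ is a CW-translation; homogeneity of $M$ is immediate from the transitivity of $N_0$. Once you insert this step, your argument is complete.
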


For simplicity, we will call the Riemannian manifold $\Gamma\backslash M$ in Proposition \ref{prop-1} a {\it Riemannian quotient}.
J.A. Wolf conjectured that the  converse  of Proposition \ref{prop-1} is also true, in the study of complete Riemannian manifolds of constant curvature (i.e., space forms) \cite{Wo2011}.
This conjecture is referred to as the {\it Homogeneity Conjecture} in the literature, and can be stated as follows:
\smallskip

\noindent{\bf Homogeneity Conjecture}\quad {\it Let $M$ be a connected simply connected homogeneous Riemannian manifold and $\Gamma$  a discontinuous group of isometries of $M$, such that the projection $\pi:M\rightarrow M'=\Gamma\backslash M$ is a Riemannian covering. Then $M'$ is a homogeneous Riemannian manifold if
and only if each element in $\Gamma$ is a CW-translation.}
\bigskip

This Conjecture has been studied for several decades. In the early 1960's, J.A. Wolf  proved that the conjecture is true when $M$ is a space form \cite{Wo1960,Wo1961}. Moreover, he also verified the conjecture in the case that $M$ is Riemannian symmetric \cite{Wo1962} (see also \cite{Fr1963,Oz1974} for alternative approaches of H. Freudenthal and V. Ozols), or
 $M$ has negative curvature \cite{Wo1964}. In recent years, the Homogeneity Conjecture has been proved to be true in the cases that  $M$ is a Stieffel manifold, or $M$ has a positive Euler characteristic,  or $M$ is certain group manifold \cite{Wo2017,Wo2023}.
 In \cite{DW2012}, S. Deng and J.A. Wolf considered an analog of the Homogeneity Conjecture in Finsler geometry, and proved it for symmetric Finsler spaces.

The Homogeneity Conjecture has fostered  many interesting  research  in homogeneous geometry. For example, V.N. Berestovskii and Yu.G. Nikonorov  classified connected simply connected Clifford-Wolf homogeneous Riemannian manifolds \cite{BN2008,BN2008-2,BN2009}.
Their results were generalized to Finsler geometry  by S. Deng and M. Xu \cite{DX2014,DX2014-2,DX2016,XD2013,XD2015,XD2015-2}.
The interrelation between CW-translations and CK-vector fields (i.e., Killing vector fields of constant length) is the key  in  related studies, and it will also be the major guideline for our consideration in this paper.
Bounded isometries and bounded Killing vector fields were explored in
\cite{Ti1964,Wo1964,Wo2017,XN2021}. In particular, J.A. Wolf and M. Xu
classified some special CK-vector fields on normal homogeneous Riemannian manifolds \cite{XW2016}. The work of Wolf-Xu
 generalizes the results in \cite{Wo1962}, and provides a crucial technique
in this paper. More details  for the study of CK-vector fields on homogeneous Riemannian manifolds or
pseudo-Riemannian normal homogeneous manifolds can be found in  \cite{Ni2013,Ni2015,WPX2017}.

Nevertheless, the  conjecture remains open before our consideration in this paper. In fact, most of the researchers in this field incline to believe that the conjecture is true, since
the existence of CW-translations does impose strong rigidity to  homogeneous Riemannian metrics. The difficulty in dealing with this problem lies in the fact that
CW-translations are rare and it is very hard to verify that an explicit isometry is a CW-translation. In this paper, we will construct
a counter example, which completely solves this long standing open problem.
The main result of this paper can be stated as follows:

\begin{theorem}\label{main-thm}
On the Lie group $\mathrm{Sp}(2)$, there exists a left invariant Riemannian metric and a cyclic subgroup
$\Gamma$ of order $2n+1$,  consisting of CW-translations,
such that the Riemannian quotient $\Gamma\backslash \mathrm{Sp}(2)$ is not homogeneous.
\end{theorem}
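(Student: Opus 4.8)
\emph{Reduction to an algebraic condition.} I would begin by reducing the Clifford--Wolf property to a single algebraic identity on $\g=\mathfrak{sp}(2)$. Fix a bi-invariant metric $Q$ on $G=\Sp(2)$ and write the left invariant metric as $\langle X,Y\rangle=Q(\phi X,Y)$ for a $Q$-symmetric positive operator $\phi$. Since left translations are isometries of a left invariant metric, each $L_a$ is automatically an isometry, and by left-invariance $d(x,ax)=d(e,x^{-1}ax)$; hence $L_a$ is a CW-translation \emph{iff} the function $d_g(e,\cdot)$ is constant on the conjugacy class of $a$. The plan is to realize all the $a^k$ at once by placing $\Gamma$ inside a circle whose left-translation flow is CW. The flow $t\mapsto L_{\exp tX_0}$ is generated by the right-invariant Killing field $X_0^R$, whose length is $|X_0^R|_g^2(g)=\langle\Ad(g^{-1})X_0,\Ad(g^{-1})X_0\rangle$. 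By the correspondence between CK-vector fields and CW-translations (Berestovskii--Nikonorov, Wolf--Xu), the whole flow consists of CW-translations once this field has constant length, i.e.
\[
Q\big(\phi\,\Ad(g)X_0,\ \Ad(g)X_0\big)\equiv\text{const},\qquad g\in G.
\]
Thus the first task is to produce a pair $(\phi,X_0)$ satisfying this one identity.

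\emph{Construction of $\phi$ and $X_0$.} Next I would exhibit such a pair in the quaternionic matrix model of $\Sp(2)$. Writing $\phi=\mathrm{Id}+(\mu-1)P_W$ for the $Q$-orthogonal projection $P_W$ onto an $\Ad(G_{X_0})$-invariant subspace $W$ and a scaling $\mu>0$, and using that $|\Ad(g)X_0|_Q$ is already constant, the identity above becomes $|P_W\Ad(g)X_0|_Q^2\equiv\text{const}$ along the adjoint orbit of $X_0$. I would take $X_0$ in a maximal torus $\mathfrak t$ and choose $W$ (a sum of root spaces, guided by the Wolf--Xu classification of CK fields) so that the adjoint orbit of $X_0$ meets the $W$-directions uniformly; verifying the resulting identity is an explicit, if delicate, computation with the root system of $\mathfrak{sp}(2)$. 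Having fixed $\phi$ and the circle $\{\exp tX_0\}$, I would set $a=\exp\!\big(\tfrac{2\pi}{2n+1}\xi\big)$, where $\R\xi=\R X_0$ is normalized so that $\exp(2\pi\xi)=e$ and $a$ is a \emph{regular} element of order $2n+1$, and put $\Gamma=\langle a\rangle$. Oddness is imposed here so that $\Gamma\cap Z(G)=\{e\}$ and the generator is regular, which is exactly what the final dimension count needs; freeness is automatic, since a left translation $L_a$ has no fixed point for $a\neq e$.

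\emph{Non-homogeneity via the isometry group.} Finally I would compute the isometry group. By the classical description of isometry groups of left invariant metrics on compact simple Lie groups, $\mathrm{Isom}(G,g)^0=L_G\cdot R_H$, where $H=\{h\in G:\Ad(h)^*\langle\cdot,\cdot\rangle=\langle\cdot,\cdot\rangle\}^0$ has Lie algebra $\{Y\in\g:\ad_Y\ \text{is skew-symmetric for}\ \langle\cdot,\cdot\rangle\}$; the main point is to identify $H$ and to check $\dim H\le 6$ (it is a proper subalgebra for our $\phi$). Since $G=\Sp(2)$ is simply connected, it is the universal Riemannian cover of $M'=\Gamma\backslash G$, so $\mathrm{Isom}(M')^0$ lifts to the identity component of the centralizer $Z_{\mathrm{Isom}(G,g)}(\Gamma)=\{L_cR_h:c\in Z_G(a),\ h\in H\}$, acting by $x\mapsto cxh^{-1}$. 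Because the identity component of the isometry group of a connected homogeneous Riemannian manifold is already transitive, $M'$ is homogeneous iff this centralizer is transitive on $G$, i.e. iff $Z_G(a)^0\cdot H=G$ (the finiteness of $\Gamma$ cannot help). As $a$ is regular, $Z_G(a)^0=T$ with $\dim T=2$, whence $\dim\big(Z_G(a)^0\cdot H\big)\le 2+6=8<10=\dim G$, transitivity fails, and $M'$ is not homogeneous. This yields the counterexample.

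\emph{Main obstacle.} The hard part is twofold. The genuinely difficult step is the \emph{exact} constant-length identity $Q(\phi\,\Ad(g)X_0,\Ad(g)X_0)\equiv\text{const}$ for all $g\in G$, not merely to first order; CW-translations are rare precisely because this is so rigid, and it forces a very particular choice of $W$ and $\mu$, to be pinned down by an explicit root-space computation in $\mathfrak{sp}(2)$. The second delicate point is the isometry-group computation: one must rule out ``hidden'' isometries of $g$ that are neither left nor right translations, since a single extra transitive symmetry on $M'$ would destroy the example; this is where simplicity of $\Sp(2)$ and the precise form of $\phi$ are used. The dimension count and the regularity/oddness of $a$ are comparatively routine once these two points are secured.
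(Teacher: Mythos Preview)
Your outline matches the paper's strategy in broad strokes, but two steps do not go through as you state them.

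\textbf{CK does not yield CW for the whole circle.} You claim that ``the whole flow consists of CW-translations once this field has constant length,'' but the Berestovskii--Nikonorov correspondence is only \emph{local}: a CK field gives CW-translations $L(\exp tX_0)$ for $|t|$ small. The nontrivial elements of $\Gamma=\langle\exp(\tfrac{2\pi}{2n+1}\xi)\rangle$ are spread over the whole circle, so for each fixed $n$ you must show that $L(\exp t_0 X_0)$ is CW for every $t_0\in(-\pi,\pi)$, not just near $0$. The paper closes this with a separate perturbation argument: for the bi-invariant metric ($s=1$) the curve $t\mapsto\exp(tu)$, $u\in\Ad(\Sp(2))X_0$, is shown (by an eigenvalue computation in the quaternionic model) to be the \emph{unique} minimizing geodesic from $e$ to $\exp(t_0u)$; a compactness argument then shows it remains minimizing for all such $u$ once $s$ is sufficiently close to $1$, and since the CK property makes all these curves geodesics of the \emph{same} speed $\langle X_0,X_0\rangle_s^{1/2}$ one obtains $d_s(e,g^{-1}\exp(t_0X_0)g)\equiv t_0\langle X_0,X_0\rangle_s^{1/2}$, hence CW. Your proposal skips this step entirely, and without it the CW conclusion for all of $\Gamma$ is unjustified.

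\textbf{The CK vector is not regular.} You plan to choose $a$ regular so that $Z_G(a)^0=T$ has dimension $2$ and the count $2+6<10$ finishes. But the vector that actually satisfies the CK identity---obtained not by a direct root computation in $\mathfrak{sp}(2)$ but via the inclusion $\Sp(2)\subset\mathrm{Spin}(7)$ with $W=\m$ the $\langle\cdot,\cdot\rangle_{\mathrm{bi}}$-complement of $\mathfrak{g}_2$---is $v=e_1=\mathrm{diag}(\mathbf{i},\mathbf{i})$, whose centralizer $C\cong U(2)$ in $\Sp(2)$ has dimension $4$, not $2$. Then $\dim C+\dim K_s\le 4+6=10$ is no longer strictly less than $\dim\Sp(2)$; the paper must additionally observe that when $\dim K_s=6$ one has $\dim(\mathfrak{c}\cap\mathfrak{k}_s)\ge 2$, whence $\dim(CK_s)\le 8$. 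You give no argument that any \emph{regular} $X_0$ can satisfy the CK identity for a non-bi-invariant left invariant metric on $\Sp(2)$, so the dimension count as you wrote it does not apply.
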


Here $\Gamma\subset \mathrm{Sp}(2)$ consists of CW-translations means that the left translation $L(g)$ on $\mathrm{Sp}(2)$,
$g'\mapsto gg'$, for each $g\in\Gamma$, is a CW-translation, with respect to
the specified left invariant metric.


The strategy for finding the counter example  originates from the exploration of right invariant CK-vector fields on a Lie group endowed with a left invariant Riemannian metric (we will call it a {\it left invariant Riemannian Lie group} for simplicity). Generally speaking, when the Lie group is compact, connected, simply connected and simple, and the metric is not bi-invariant, nontrivial right invariant CK-vector fields are very rare. Surprisingly, they do exist in a few occasions. Using some CK-vector fields on
normal homogeneous manifolds \cite{XW2016}, nontrivial right invariant CK-vector fields can be found on $\mathrm{SU}(2n)$, $\mathrm{Spin}(2n)$ and $\mathrm{Spin}(7)$, where certain Riemannian metrics are endowed, which are left invariant, but not bi-invariant (see Theorem \ref{thm-1} and Lemma \ref{lemma-4}). In particular, that metric and CK-vector field on $\mathrm{Spin}(7)$ can be inherited by the subgroup $\mathrm{Sp}(2)$ of $\mathrm{Spin}(7)$. To be precise,
we construct a smooth family of left invariant metrics $\langle\cdot,\cdot\rangle_s$, $s>0$ on $\mathrm{Sp}(2)$, which are bi-invariant if and only if $s=1$, and share a common nonzero right invariant CK-vector field, induced by a nonzero vector $v\in\mathfrak{sp}(2)$ (see Example \ref{example-1}). In the one-parameter subgroup generated by that $v$, we can find a cyclic subgroup $\Gamma$ of order $(2n+1)$.
On one hand, the Riemannian quotients $(\Gamma\backslash \mathrm{Sp}(2),\langle\cdot,\cdot\rangle_s)$ is not homogeneous when $s\neq1$ (see Lemma \ref{lemma-11}).
On the other hand, the
left translations of $\Gamma$ on $(\mathrm{Sp}(2),\langle\cdot,\cdot\rangle_s$
consist of CW-translations when $s$ is sufficiently close to $1$ (see Lemma \ref{lemma-12}).
To summarize, we just choose $s\neq1$ which is sufficiently close to $1$,
then the counter example is found.

This paper is organized as follows. In Section 2, we present some preliminaries on homogeneous geometry and Lie theory, and set the notations which will be used in this paper. In Section 3, we introduce CW-translations and CK-vector fields, and discuss right invariant CK-vector fields on Lie groups. In Section 4, we construct the counter example, which proves Theorem \ref{main-thm}.
\section{Preliminaries}

\subsection{Left invariant and bi-invariant Riemannian metrics}
Let $G$ be a compact connected simple Lie group. Then the  Lie algebra $\mathfrak{g}=\mathrm{Lie}(G)$ is also compact simple.
Let $\langle\cdot,\cdot\rangle$ be an inner product on $\mathfrak{g}$. By
left translations, $\langle\cdot,\cdot\rangle$ naturally induces a {\it left invariant metric} on $G$.
By the compactness of $\mathfrak{g}$,
there exists a bi-invariant (i.e.,
$\mathrm{Ad}(G)$-invariant) inner product $\langle\cdot,\cdot\rangle_\mathrm{bi}$ on $\mathfrak{g}$, which induces
a {\it bi-invariant metric} on $G$. Since  $\mathfrak{g}$ is  simple, the inner product $\langle\cdot,\cdot\rangle_{\mathrm{bi}}$ must be a negative multiple of the Killing form, so up to scalars, it is unique.
For the sake of simplicity, we will use the same notation to denote the inner product and the corresponding metric. In particular,   we will say that  $(G,\langle\cdot,\cdot\rangle)$ is a {\it left invariant Riemannian Lie group}, and  $(G,\langle\cdot,\cdot\rangle_\mathrm{bi})$ a {\it bi-invariant Riemannian Lie group}.

Let $I(G,\langle\cdot,\cdot\rangle)$ be the full group of isometries
of $(G,\langle\cdot,\cdot\rangle)$, and
$I_0(G,\langle\cdot,\cdot\rangle)$  the identity component of $I(G,\langle\cdot,\cdot\rangle)$. It is well known
that $L(G)\subset I_0(G,\langle\cdot,\cdot\rangle)\subset L(G)R(G)$ (see Theorem 1 in \cite{OT1976}). Here $L(G)$ and $R(G)$ are the Lie groups of all left translations and the group of all right translations on $G$, and their Lie algebras
can be canonically identified with the space of all right invariant vector fields and the space of all left invariant vector fields, respectively.
This observation implies that
$I_0(G,\langle\cdot,\cdot\rangle)$ is locally isomorphic to a product Lie group $G\times K$, where $G$ consists of  all left translations, and
$$K=\{g\in G|\langle\mathrm{Ad}(g)u,\mathrm{Ad}(g)v\rangle
=\langle u,v\rangle,\forall u,v\in\mathfrak{g}\}$$
corresponds to all isometric right translations.
In particular, $I_0(G,\langle\cdot,\cdot\rangle_{\mathrm{bi}})=L(G)R(G)\cong
G\times G/\Delta C(G)$, where $\Delta C(G)=\{(g,g)| g\in C(G)\}$ is a discrete normal subgroup of $G\times G$.

For any discrete subgroup $\Gamma$ of the left invariant Riemannian Lie group $(G,\langle\cdot,\cdot\rangle)$, the smooth coset space $\Gamma\backslash G=\{\Gamma g\ |\ \forall g\in G\}$ can be endowed with a metric, which will be denoted by the same $\langle\cdot,\cdot\rangle$, such that
the quotient map from $G$ to $\Gamma\backslash G$ is locally isometric. For simplicity,
We call $(\Gamma\backslash G,\langle\cdot,\cdot\rangle)$ a {\it Riemannian quotient}. The Riemannian quotient of a left invariant Riemannian Lie group may not be homogeneous in general.

\subsection{Homogeneous and normal homogeneous Riemannian metrics}

Let $G$ be compact connected simple Lie group, and $H$ a closed connected subgroup of $G$. Denote by $\mathfrak{g}$ and $\mathfrak{h}$ their Lie algebras respectively. A direct sum decomposition $\mathfrak{g}=\mathfrak{h}+\mathfrak{m}$ of $\mathfrak{g}$, where $\mathfrak{m}$ is a subspace of $\mathfrak{g}$, is called a {\it reductive decomposition} for the homogeneous manifold $G/H$, if
it is $\mathrm{Ad}(H)$-invariant, or equivalently, in the Lie algebra level, if $[\mathfrak{h},\mathfrak{m}]\subset\mathfrak{m}$. The compactness of $H$
guarantees the existence of reductive decompositions. In general, we will take $\mathfrak{m}$ to be the $\langle\cdot,\cdot\rangle_{\mathrm{bi}}$-orthogonal complement of $\mathfrak{h}$. In this case,   the corresponding reductive decomposition will usually be referred to as
{\it bi-invariant orthogonal}.

Let $\langle\cdot,\cdot\rangle$ be an inner product on $\mathfrak{m}$ which is $\mathrm{Ad}(H)$-invariant, i.e.,  $$\langle \mathrm{Ad}(g)u,\mathrm{Ad}(g)v\rangle=\langle u,v\rangle,\quad\forall g\in H,\ u,v\in\mathfrak{m}.$$
 By the left $G$-actions, it induces a {\it homogeneous} (or $G$-invariant) Riemannian metric on $G/H$. This correspondence is one-to-one. In the case that $\langle\cdot,\cdot\rangle=\langle\cdot,\cdot\rangle_{\mathrm{bi}}
|_{\mathfrak{m}\times\mathfrak{m}}$, the corresponding homogeneous metric
is called {\it normal homogeneous}.

%

\subsection{Conventions and notations for roots and root planes}
Let $G/H$ be a homogeneous manifold, where $G$ is a compact connected simple Lie group, and $H$  a closed connected subgroup of $G$.
Let $\mathfrak{g}=\mathfrak{h}+\mathfrak{m}$ be a bi-invariant orthogonal decomposition for $G/H$.
To study the roots and root planes of $\mathfrak{g}=\mathrm{Lie}(G)$ and $\mathfrak{h}=\mathrm{Lie}(H)$, we adopt the following conventions and notations from \cite{XW2016}.

Let  $\mathfrak{t}$ be a Cartan subalgebra  of $\mathfrak{g}$ such that
$\mathfrak{t}\cap\mathfrak{h}$ is a Cartan subalgebra of $\mathfrak{h}$.
Then we have the root plane decompositions:
\begin{equation}\nonumber\label{001}
\mathfrak{g}=\mathfrak{t}+\sum_{\alpha\in\Delta_\mathfrak{g}}
\mathfrak{g}_{\pm\alpha}\quad\mbox{and}\quad\mathfrak{h}=\mathfrak{t}\cap
\mathfrak{h}
+\sum_{\alpha'\in\Delta_\mathfrak{h}}\mathfrak{h}_{\alpha'},
\end{equation}
here $\Delta_\mathfrak{g}$ and $\Delta_\mathfrak{h}$ are the root systems
of $\mathfrak{g}$ with respect to $\mathfrak{t}$ and $\mathfrak{h}$ with respect to $\mathfrak{h}\cap\mathfrak{t}$, respectively. Fix a bi-invariant inner product $\langle\cdot,\cdot\rangle_{\mathrm{bi}}$ on $\mathfrak{g}$.
We can regard $\Delta_\mathfrak{g}$ and $\Delta_\mathfrak{h}$ as subsets of $\mathfrak{t}$ and $\mathfrak{t}\cap\mathfrak{h}$,  respectively. More precisely, for any root $\alpha\in\mathfrak{t}$ of $\mathfrak{g}$ and  $\alpha'\in\mathfrak{t}\cap\mathfrak{h}$ of $\mathfrak{h}$, there exist a basis $\{u,v\}$ of $\mathfrak{g}_{\pm\alpha}$ and a basis
$\{u',v'\}$ of $\mathfrak{h}_{\pm\alpha'}$, such that
\begin{eqnarray*}
& &[x,u]=\langle x,\alpha\rangle_{\mathrm{bi}}v,\quad [x,v]=-\langle x,\alpha\rangle_{\mathrm{bi}}u,\quad\forall x\in\mathfrak{t},
\end{eqnarray*}
and
\begin{eqnarray*}
& &[x',u']=\langle x',\alpha'\rangle_{\mathrm{bi}}v', \quad
[x',v']=-\langle x',\alpha'\rangle_{\mathrm{bi}}u',\quad
\forall x'\in\mathfrak{t}\cap\mathfrak{h}.
\end{eqnarray*}

As an example, we consider the case $(\mathfrak{g},\mathfrak{h})=(B_3,G_2)$ corresponding to the homogeneous sphere $S^7=\mathrm{Spin}(7)/\mathrm{G}_2$.
Their roots and root planes have been discussed in \cite{XW2016}.

%
\label{section-2-4}

We may adjust
the bi-invariant inner product $\langle\cdot,\cdot\rangle_\mathrm{bi}$ on $\mathfrak{g}$ with a
suitable scalar, such that the root system of $\mathfrak{g}$ can be canonically presented as $$\Delta_\mathfrak{g}=\{\pm e_i\pm e_j,\ \forall 1\leq i<j\leq 3;\quad \pm e_i,\forall 1\leq i\leq 3\},$$
where $\{e_1,e_2,e_3\}$ is an orthonormal basis of $\mathfrak{t}$. Since
$\mathfrak{t}\cap\mathfrak{h}$ and $\mathfrak{t}\cap\mathfrak{m}$ are linearly spanned by $\{e_1-e_2,e_2-e_3\}$ and $e_1+e_2+e_3$ respectively,
the root system of $\mathfrak{h}$ coincides with the following subset:
$$\Delta_\mathfrak{h}=\{\pm (e_i-e_j),\ \forall 1\leq i<j\leq 3;\quad\tfrac13(e_1+e_2+e_3)-e_i,\forall 1\leq i\leq 3\}.$$

The root planes of $\mathfrak{h}$ are described in the following lemma.

\begin{lemma}\label{lemma-1}$(1)$ For $1\leq i\ne j\leq 3$, the root plane $\mathfrak{g}_{\pm(e_i-e_j)}$ of $\mathfrak{g}$ coincides with the root plane $\mathfrak{h}_{\pm(e_i-e_j)}$ of $\mathfrak{h}$.

$(2)$ For $\{i,j,k\}=\{1,2,3\}$, we have the linear decomposition
$$\mathfrak{g}_{\pm(e_i+e_j)}+\mathfrak{g}_{\pm e_k}=\mathfrak{h}\cap(\mathfrak{g}_{\pm(e_i+e_j)}+\mathfrak{g}_{\pm e_k})
+\mathfrak{m}\cap(\mathfrak{g}_{\pm(e_i+e_j)}+\mathfrak{g}_{\pm e_k}),$$
in which $\mathfrak{h}\cap(\mathfrak{g}_{\pm(e_i+e_j)}+\mathfrak{g}_{\pm e_k})$ coincides with the root plane $\mathfrak{h}_{\pm(\tfrac13e_i+\tfrac13e_j-\tfrac23e_k)}$ of $\mathfrak{h}$.

$(3)$ For $1\leq i\neq j\leq 3$,
$\mathfrak{h}\cap\mathfrak{g}_{\pm e_i}=\mathfrak{h}\cap\mathfrak{g}_{\pm(e_i+e_j)}=\{0\}$.

$(4)$ For $1\leq i\neq j\leq 3$, $\mathfrak{m}\cap\mathfrak{g}_{\pm e_i}= \mathfrak{m}\cap\mathfrak{g}_{\pm(e_i+e_j)}=\{0\}$.
\end{lemma}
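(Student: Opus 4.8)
The plan is to exploit the $\langle\cdot,\cdot\rangle_{\mathrm{bi}}$-orthogonal projection $\mathrm{pr}\colon\mathfrak{t}\to\mathfrak{t}\cap\mathfrak{h}$, whose kernel is the line $\mathfrak{t}\cap\mathfrak{m}=\mathbb{R}v_0$ with $v_0=e_1+e_2+e_3$. A direct computation gives $\mathrm{pr}(e_i-e_j)=e_i-e_j$, $\mathrm{pr}(e_i)=e_i-\tfrac13 v_0$, and $\mathrm{pr}(e_i+e_j)=\tfrac13 e_i+\tfrac13 e_j-\tfrac23 e_k=-\mathrm{pr}(e_k)$ for $\{i,j,k\}=\{1,2,3\}$. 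Thus the long roots $\pm(e_i-e_j)$ of $\mathfrak{h}$ are the projections of $\pm(e_i-e_j)$ only, while each short root $\pm(\tfrac13 e_i+\tfrac13 e_j-\tfrac23 e_k)$ is the common projection of exactly the two pairs $\pm(e_i+e_j)$ and $\pm e_k$. The mechanism that makes this relevant is that for $x'\in\mathfrak{t}\cap\mathfrak{h}$ one has $\langle x',\alpha\rangle_{\mathrm{bi}}=\langle x',\mathrm{pr}(\alpha)\rangle_{\mathrm{bi}}$, so by the defining bracket relations of the root planes the plane $\mathfrak{g}_{\pm\alpha}$ is precisely the $\mathrm{ad}(\mathfrak{t}\cap\mathfrak{h})$-weight space of weight $\pm\mathrm{pr}(\alpha)$.

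Next I would organize $\mathfrak{g}$ by these weights. Since $\langle\cdot,\cdot\rangle_{\mathrm{bi}}$ is $\mathrm{Ad}(G)$-invariant, each $\mathrm{ad}(x')$ with $x'\in\mathfrak{t}\cap\mathfrak{h}$ is skew-symmetric; as $\mathfrak{h}$ is $\mathrm{ad}(\mathfrak{t}\cap\mathfrak{h})$-invariant, so is its $\langle\cdot,\cdot\rangle_{\mathrm{bi}}$-orthogonal complement $\mathfrak{m}$, and the orthogonal projections $P_\mathfrak{h}$, $P_\mathfrak{m}$ commute with $\mathrm{ad}(\mathfrak{t}\cap\mathfrak{h})$. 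Hence they preserve every weight space, so for each root $\alpha'$ of $\mathfrak{h}$ the weight-$\pm\alpha'$ space $V_{\alpha'}=\bigoplus_{\mathrm{pr}(\alpha)=\pm\alpha'}\mathfrak{g}_{\pm\alpha}$ splits as $V_{\alpha'}=(V_{\alpha'}\cap\mathfrak{h})\oplus(V_{\alpha'}\cap\mathfrak{m})$, with $V_{\alpha'}\cap\mathfrak{h}$ equal to the weight-$\pm\alpha'$ space of $\mathfrak{h}$, that is, the root plane $\mathfrak{h}_{\pm\alpha'}$. Taking $\alpha'=e_i-e_j$ gives $V_{\alpha'}=\mathfrak{g}_{\pm(e_i-e_j)}$, which is two-dimensional and therefore coincides with $\mathfrak{h}_{\pm(e_i-e_j)}$, proving $(1)$. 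Taking $\alpha'=\tfrac13 e_i+\tfrac13 e_j-\tfrac23 e_k$ gives $V_{\alpha'}=\mathfrak{g}_{\pm(e_i+e_j)}+\mathfrak{g}_{\pm e_k}$, and the resulting splitting is exactly the decomposition of $(2)$, with $\mathfrak{h}$-part the root plane $\mathfrak{h}_{\pm(\tfrac13 e_i+\tfrac13 e_j-\tfrac23 e_k)}$.

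For $(3)$ and $(4)$ I would first note that when $\mathrm{pr}(\alpha)\ne0$ the plane $\mathfrak{g}_{\pm\alpha}$ is $\mathrm{ad}(\mathfrak{t}\cap\mathfrak{h})$-irreducible, since some $\mathrm{ad}(x')$ acts on it as a nonzero rotation; this applies to $\alpha=e_i$ and $\alpha=e_i+e_j$. Consequently each of $\mathfrak{h}\cap\mathfrak{g}_{\pm\alpha}$ and $\mathfrak{m}\cap\mathfrak{g}_{\pm\alpha}$ is either $\{0\}$ or all of $\mathfrak{g}_{\pm\alpha}$. For $(3)$, if $\mathfrak{g}_{\pm\alpha}\subset\mathfrak{h}$ then $\mathbb{R}\alpha=[\mathfrak{g}_{\pm\alpha},\mathfrak{g}_{\pm\alpha}]\subset\mathfrak{t}\cap\mathfrak{h}$ (here $2\alpha\notin\Delta_\mathfrak{g}$, so the self-bracket of the root plane lands in $\mathfrak{t}$ and spans $\mathbb{R}\alpha$), forcing $\langle\alpha,v_0\rangle_{\mathrm{bi}}=0$; but $\langle e_i,v_0\rangle_{\mathrm{bi}}=1$ and $\langle e_i+e_j,v_0\rangle_{\mathrm{bi}}=2$, a contradiction, so $\mathfrak{h}\cap\mathfrak{g}_{\pm e_i}=\mathfrak{h}\cap\mathfrak{g}_{\pm(e_i+e_j)}=\{0\}$. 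For $(4)$, suppose $\mathfrak{g}_{\pm(e_i+e_j)}\subset\mathfrak{m}$; in the four-dimensional $V=\mathfrak{g}_{\pm(e_i+e_j)}+\mathfrak{g}_{\pm e_k}$ the $\mathfrak{h}$-part is two-dimensional by $(2)$, whence $V\cap\mathfrak{m}=\mathfrak{g}_{\pm(e_i+e_j)}$ and $V\cap\mathfrak{h}=\mathfrak{g}_{\pm e_k}\subset\mathfrak{h}$, contradicting $(3)$; the same argument with $V=\mathfrak{g}_{\pm e_i}+\mathfrak{g}_{\pm(e_j+e_k)}$ rules out $\mathfrak{g}_{\pm e_i}\subset\mathfrak{m}$.

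The conceptual content lies entirely in the second paragraph: the commutation of the orthogonal projections $P_\mathfrak{h}$, $P_\mathfrak{m}$ with the torus action, which forces the weight spaces to respect the splitting $\mathfrak{g}=\mathfrak{h}+\mathfrak{m}$. Once this is in place, $(1)$ and $(2)$ reduce to dimension counts and $(3)$, $(4)$ to short contradictions. The only input from the explicit $B_3$ root data beyond the projection formulas is the fact $2\alpha\notin\Delta_\mathfrak{g}$ used in $(3)$, which is immediate and I would not dwell on it. I expect no serious obstacle; the main point requiring care is the bookkeeping of which roots of $\mathfrak{g}$ share a given projection, and the formulas recorded above settle this completely.
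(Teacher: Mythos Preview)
Your proposal is correct and follows essentially the same approach as the paper. The paper disposes of (1) and (2) by a bare reference to the root plane decompositions (citing \cite{XW2016}), proves (3) by the same contradiction you give (nonzero intersection forces $\mathfrak{g}_{\pm\alpha}\subset\mathfrak{h}$, whence $\alpha\in\mathfrak{t}\cap\mathfrak{h}$), and deduces (4) from (3) together with the bi-invariant orthogonal splitting of the four-dimensional weight space, exactly as you do; your write-up is simply more self-contained, making explicit the projection $\mathrm{pr}$ and the commutation of $P_\mathfrak{h},P_\mathfrak{m}$ with $\mathrm{ad}(\mathfrak{t}\cap\mathfrak{h})$ that the paper leaves implicit.
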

\begin{proof} (1) and (2) follow immediately after the root plane decompositions of $\mathfrak{g}$ and $\mathfrak{h}$. See Section 2 in \cite{XW2016} for more details.

 For (3), we will only verify  $\mathfrak{g}_{\pm(e_1+e_2)}\cap\mathfrak{h}=\{0\}$, because other statements can be proved similarly.
Assume conversely that $\mathfrak{g}_{\pm(e_1+e_2)}\cap\mathfrak{h}\neq\{0\}$. Then $\mathfrak{g}_{\pm(e_1+e_2)}$ is linearly spanned by $\mathfrak{g}_{\pm(e_1+e_2)}\cap\mathfrak{h}$ and $[e_2-e_3,\mathfrak{g}_{\pm(e_1+e_2)}\cap\mathfrak{h}]\subset\mathfrak{h}$.
This implies that $\mathfrak{g}_{\pm(e_1+e_2)}\subset\mathfrak{h}$. Then we have
$e_1+e_2\in[\mathfrak{g}_{\pm(e_1+e_2)},
\mathfrak{g}_{\pm(e_1+e_2)}]\subset\mathfrak{h}$, which
contradicts the fact that $\mathfrak{t}\cap\mathfrak{h}=\mathbb{R}(e_1-e_2)+\mathbb{R}(e_2-e_3)$.

(4) is the corollary of (3) and the bi-invariant orthogonality.
\end{proof}

\section{CW-translations and CK-vector fields}
\subsection{Examples of CW-translations and CK-vector fields}
Now we recall some results about CW-translations and CK-vector fields. In \cite{Wo1962}, J.A. Wolf initiated the study of CW-translations
on Riemannian symmetric spaces. He showed that parallel transformations on Euclidean spaces, antipodal maps on standard spheres, and left and right translations on a bi-invariant Riemannian  Lie group $(G,\langle\cdot,\cdot\rangle_\mathrm{bi})$ are all CW-translations. Now we summarize some straightforward criterions for CW-translations.

Recall that a Killing vector field $V$ on a Riemannian manifold $(M,\langle,\rangle)$ is called a {\it CK-vector field}
if it is of constant length, i.e.,  if $\langle V(x),V(x)\rangle$ is a constant function of $x\in M$. Equivalently,  a Killing vector field $V$ is a CK-vector field if and only if
any integral curve of $V$ is a geodesic.

CK-vector fields are closely related  to CW-translations. In fact,  on a compact or homogeneous Riemannian manifold,  a CK-vector field generates a local one-parameter subgroup which consists of CW-translations, and conversely, any
CW-translation which is sufficiently close to the identity map can be generated by a CK-vector field (see Section 4 in \cite{BN2009}).
%

For example, left invariant and right invariant vector fields on
$(G,\langle\cdot,\cdot\rangle_{\mathrm{bi}})$ are CK-vector fields. The corresponding CW-translations
 are right and left translations, respectively.
This assertion can be verified using the following
criterion.
\begin{lemma}\label{lemma-2}
Let $\rho$ (or $V$) be an isometry (a Killing vector field) on a connected Riemannian manifold $M$. If the centralizer of $\rho$ (or $V$) in the isometry group acts transitively on $M$, then $\rho$ (or $V$) is a CW-translation (a CK-vector field).
\end{lemma}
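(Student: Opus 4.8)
The plan is to exploit the invariance of two naturally associated functions---the displacement function of $\rho$ and the squared-length function of $V$---under the centralizer, and then to invoke transitivity to conclude that these functions must be constant. Both halves of the statement follow from the same one-line computation, so I would treat them in parallel.

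For the isometry $\rho$, I would consider the displacement function $f(x)=d(x,\rho(x))$ on $M$. The key observation is that any isometry $\phi$ commuting with $\rho$ leaves $f$ invariant: since $\phi\rho=\rho\phi$ and $\phi$ preserves the distance $d$, one has
\[
f(\phi(x))=d(\phi(x),\rho(\phi(x)))=d(\phi(x),\phi(\rho(x)))=d(x,\rho(x))=f(x).
\]
Now let $Z(\rho)$ denote the centralizer of $\rho$ in the isometry group, and assume $Z(\rho)$ acts transitively on $M$. Then for any two points $x,y\in M$ there is some $\phi\in Z(\rho)$ with $\phi(x)=y$, whence $f(y)=f(\phi(x))=f(x)$. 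Since $x,y$ are arbitrary, $f$ is constant on the connected manifold $M$, which is precisely the statement that $\rho$ is a CW-translation.

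For the Killing vector field $V$, the argument is completely analogous, carried out on the squared-length function $g(x)=\langle V(x),V(x)\rangle$. Here I would read the centralizer of $V$ in the isometry group as the subgroup of those isometries $\phi$ satisfying $\phi_*V=V$, equivalently $d\phi_x(V(x))=V(\phi(x))$ for all $x$; this is exactly the condition that $\phi$ commute with the (local) one-parameter flow generated by $V$. For such a $\phi$, the fact that $d\phi_x$ is a linear isometry of tangent spaces gives
\[
g(\phi(x))=\langle V(\phi(x)),V(\phi(x))\rangle=\langle d\phi_x(V(x)),d\phi_x(V(x))\rangle=\langle V(x),V(x)\rangle=g(x),
\]
and transitivity of the centralizer then forces $g$ to be constant, so that $V$ has constant length and is a CK-vector field. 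The proof is elementary and I do not anticipate a genuine obstacle; the only point requiring a little care is the precise meaning of ``centralizer'' in the Killing-field formulation, where one must translate the algebraic condition of commuting with the flow into the pointwise relation $d\phi_x(V(x))=V(\phi(x))$ that makes the invariance computation go through.
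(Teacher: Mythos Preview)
Your argument is correct and is exactly the standard one: the displacement function (respectively the squared-length function) is invariant under the centralizer, and transitivity forces it to be constant. The paper itself states this lemma without proof, treating it as an elementary criterion, so there is nothing to compare against; your write-up supplies precisely the routine verification the authors omitted.
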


Notice that the condition in Lemma \ref{lemma-2} implies that the Riemannian manifold $M$ must be homogeneous. Moreover,   the lemma can  be applied to very few cases. To study general CK-vector fields, we need a better criterion (see Lemma \ref{lemma-3} below).

\subsection{CK-vector fields on homogeneous Riemannian manifolds}

Let  $G$ be a connected compact  simple Lie group, and $H$  a closed connected subgroup of $G$. Consider the coset space $G/H$. Denote
by $\mathfrak{g}=\mathfrak{h}+\mathfrak{m}$ the bi-invariant orthogonal
decomposition for $G/H$. As mentioned above, we use   $\langle\cdot,\cdot\rangle$ to denote an $\mathrm{Ad}(H)$-invariant inner product on $\mathfrak{m}$, as well as
the homogeneous metric  induced on $G/H$. Any vector $v\in\mathfrak{g}$
generates a Killing vector field $X$ on the homogeneous
Riemannian manifold $(G/H,\langle\cdot,\cdot\rangle)$ defined by
\begin{equation*}
X(gH)=\tfrac{d}{dt}|_{t=0}(\exp tv g) H,\quad\forall g\in G.
\end{equation*}
The following well known criterion can be applied to determine if $X$ is a CK-vector field.

%
%
%

\begin{lemma}\label{lemma-3}
The vector $v\in\mathfrak{g}$ generates a CK-vector field on the homogeneous Riemannian manifold $(G/H,\langle\cdot,\cdot\rangle)$ if and only if
\begin{equation}\nonumber
\langle\mathrm{pr}_\mathfrak{m}(\mathrm{Ad}(g)v),\mathrm{pr}_\mathfrak{m}
(\mathrm{Ad}(g)v)\rangle\equiv \mathrm{const\,\, for} \,\, g\in G,
\end{equation}
where $\langle\cdot,\cdot\rangle$ is the inner product and $\mathrm{pr}_\mathfrak{m}:\mathfrak{g}\rightarrow\mathfrak{m}$
is the linear projection with respect to the bi-invariant reductive decomposition. Moreover, if $v\in\mathfrak{g}$ generates a CK-vector field on
$(G/H,\langle\cdot,\cdot\rangle)$, then so does $\mathrm{Ad}(g)v$ for any $g\in G$.
\end{lemma}

In the special case that  $(G/H,\langle\cdot,\cdot\rangle)$ is normal homogeneous
and $0<\dim H <\dim G$,  nonzero CK-vector fields $v\in\mathfrak{g}$ are classified in \cite{XW2016}. The following main theorem in \cite{XW2016} implies that, in this case,
the existence of a nonzero CK-vector field $v\in\mathfrak{g}$ imposes
strong rigidity of the metric.

\begin{theorem}\label{thm-1}
Let $G$ be a  connected compact simple Lie group and $H$
a closed subgroup with $0<\dim H<\dim G$. Fix a normal Riemannian metric
on $M=G/H$. Suppose there is a  nonzero vector $v\in\mathfrak{g}$ generating a CK-vector field on $M=G/H$. Then $M$ is a complete locally symmetric Riemannian manifold. Moreover,  the universal Riemannian covering of $M$ is either an odd-dimensional sphere of constant curvature, or the Riemannnian symmetric space
$\mathrm{SU}(2n)/\mathrm{Sp}(n)$.
\end{theorem}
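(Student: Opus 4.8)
The plan is to exploit Lemma \ref{lemma-3}, which characterizes a CK-vector field $v\in\mathfrak{g}$ on the normal homogeneous space $M=G/H$ by the condition that $\langle\mathrm{pr}_\mathfrak{m}(\mathrm{Ad}(g)v),\mathrm{pr}_\mathfrak{m}(\mathrm{Ad}(g)v)\rangle_{\mathrm{bi}}$ is constant in $g\in G$. Since the metric is normal, this projected squared length equals $\langle\mathrm{Ad}(g)v,\mathrm{Ad}(g)v\rangle_{\mathrm{bi}}-\langle\mathrm{pr}_\mathfrak{h}(\mathrm{Ad}(g)v),\mathrm{pr}_\mathfrak{h}(\mathrm{Ad}(g)v)\rangle_{\mathrm{bi}}$, and the first term is already constant because $\langle\cdot,\cdot\rangle_{\mathrm{bi}}$ is $\mathrm{Ad}(G)$-invariant. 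Hence the CK-condition is equivalent to requiring that the squared length of the $\mathfrak{h}$-component of $\mathrm{Ad}(g)v$ be constant, i.e. that the function $g\mapsto\langle\mathrm{pr}_\mathfrak{h}(\mathrm{Ad}(g)v),\mathrm{pr}_\mathfrak{h}(\mathrm{Ad}(g)v)\rangle_{\mathrm{bi}}$ be constant on $G$. First I would recast this as a differential-geometric rigidity statement: differentiating this constancy condition along one-parameter subgroups $\exp(tw)$ for arbitrary $w\in\mathfrak{g}$ produces a hierarchy of algebraic identities relating $[w,v]$, its projections, and nested brackets. I expect the first-order identity to say that the bilinear map measuring how $\mathrm{pr}_\mathfrak{h}$ responds to infinitesimal motion vanishes in a precise sense, which is the infinitesimal germ of local symmetry.

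The key structural step is to translate the constancy of $\|\mathrm{pr}_\mathfrak{h}(\mathrm{Ad}(g)v)\|^2$ into the statement that $M$ is locally symmetric, i.e. that the canonical connection (equivalently, after verifying the torsion-free normal condition, the Levi-Civita connection) has parallel curvature. The natural route is to use the established theory of CK-vector fields on normal homogeneous spaces: a CK-vector field whose integral curves are geodesics forces the orbit structure to be highly constrained, and the second-order consequences of the constancy condition give $\nabla R=0$. I would organize this by showing that the $\mathrm{Ad}(H)$-orbit of $v$ (or rather the way $v$ interacts with the root-plane decomposition of $\mathfrak{g}=\mathfrak{h}+\mathfrak{m}$) must be compatible with a symmetric-space involution; concretely, one analyzes the component of $v$ in each root plane and uses identities of the type established in Lemma \ref{lemma-1} to see that $v$ cannot have generic components, but must align with a distinguished symmetric decomposition of $\mathfrak{g}$.

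Once local symmetry is in hand, I would invoke the classification of simply connected irreducible Riemannian symmetric spaces of compact type together with the normality and the simplicity of $G$. The universal cover $\widetilde{M}$ must be a compact symmetric space $\widetilde{G}/\widetilde{H}$ that simultaneously carries a normal homogeneous metric induced by the Killing form of the simple group $G$ and admits a nonzero Killing field of constant length. The compact rank-one symmetric spaces and the spaces $\mathrm{SU}(2n)/\mathrm{Sp}(n)$ are precisely those for which such a CK-field survives; the remaining symmetric spaces are ruled out because, for them, the constant-length Killing fields coming from $\mathfrak{g}$ are forced to be trivial (this is where the inequality $0<\dim H<\dim G$ is essential, eliminating the degenerate cases $H=\{e\}$ and $H=G$). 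Among rank-one compact symmetric spaces, the constant-length condition singles out the odd-dimensional round spheres (where the Hopf-type Killing fields have constant length), while even-dimensional spheres, projective spaces, and the exceptional Cayley plane are excluded. This yields exactly the two families in the statement.

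The main obstacle, I expect, is the middle step: rigorously deriving global local-symmetry ($\nabla R=0$) from the purely pointwise algebraic constancy condition of Lemma \ref{lemma-3}. The first-order consequence is routine, but controlling the higher-order brackets — and in particular ruling out configurations where $\mathrm{pr}_\mathfrak{h}(\mathrm{Ad}(g)v)$ has constant norm while $v$ is \emph{not} adapted to a symmetric decomposition — requires a careful case analysis over the root-plane structure of $(\mathfrak{g},\mathfrak{h})$ and is where the real work lies; the final classification step is then a matter of matching against known tables.
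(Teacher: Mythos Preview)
The paper does not prove Theorem \ref{thm-1}; it is quoted as the main theorem of \cite{XW2016}, and the present paper only uses it (and the root-plane techniques from that reference) as input. So there is no in-paper proof to compare against; what one can say is that the argument in \cite{XW2016}, as signposted here through Lemma \ref{lemma-1} and Example \ref{example-3}, proceeds by a direct root-system case analysis of the pair $(\mathfrak{g},\mathfrak{h})$ rather than by first establishing local symmetry abstractly.

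Your proposal has a genuine gap at the step you yourself flag. You aim to show $\nabla R=0$ directly from the constancy of $g\mapsto\langle\mathrm{pr}_\mathfrak{h}(\mathrm{Ad}(g)v),\mathrm{pr}_\mathfrak{h}(\mathrm{Ad}(g)v)\rangle_{\mathrm{bi}}$, and then appeal to the classification of symmetric spaces. But the constancy condition concerns a single $\mathrm{Ad}(G)$-orbit in $\mathfrak{g}$; differentiating it yields identities of the form $\langle[w,v]_\mathfrak{h},v_\mathfrak{h}\rangle_{\mathrm{bi}}=0$ and their iterates, which constrain $v$ relative to the root-plane structure of $(\mathfrak{g},\mathfrak{h})$ but do not, by themselves, force the full curvature tensor of $G/H$ to be parallel. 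There is no mechanism offered for passing from information about one vector to a statement about all of $\mathfrak{m}$, and I do not believe such a direct implication exists without essentially carrying out the classification. In \cite{XW2016} the logic runs the other way: the algebraic identities coming from the CK condition are fed into a case-by-case analysis over the compact simple types and their subalgebras (using exactly the sort of root-plane bookkeeping illustrated in Lemma \ref{lemma-1}); the surviving pairs $(\mathfrak{g},\mathfrak{h})$ are listed, and local symmetry is then a corollary because every survivor happens to be locally isometric to a round odd sphere or to $\mathrm{SU}(2n)/\mathrm{Sp}(n)$.

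Your final step is also thinner than it looks. Even granting local symmetry, $M=G/H$ with $G$ simple need not have $(\mathfrak{g},\mathfrak{h})$ a symmetric pair (witness $\mathrm{Spin}(7)/\mathrm{G}_2\cong S^7$), so ``invoke the classification of symmetric spaces'' really means invoking the classification of transitive isometric actions of simple groups on symmetric spaces, and then, case by case, determining which admit a nonzero CK vector from $\mathfrak{g}$. Asserting that exactly the odd spheres and $\mathrm{SU}(2n)/\mathrm{Sp}(n)$ survive is restating the theorem, not proving it.
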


Using the method in  the proof of Theorem \ref{thm-1} in \cite{XW2016}, we can determine
 all the CK-vector fields $v\in\mathfrak{g}$ in Theorem \ref{thm-1}.
In particular, we have the following example (see Section 6B in \cite{XW2016}).
\begin{example}\label{example-3}
The vector $v=e_1\in\mathfrak{g}=B_3$ generates a CK-vector field on the normal homogeneous Riemannian $7$-sphere $G/H=\mathrm{Spin}(7)/\mathrm{G}_2$.
\end{example}
Notice that here we have applied the conventions and notations in Section \ref{section-2-4}.
\subsection{CK-vector fields on left invariant Riemannian Lie groups}

Let $(G,\langle\cdot,\cdot\rangle)$ be a left invariant Riemannian compact connected simple Lie group. By Lemma \ref{lemma-2}, all left invariant fields on $(G,\langle\cdot,\cdot\rangle)$ are CK-vector fields. Therefore it is natural to study  the right invariant ones and search for CK-vector fields among them.
The manifold $(G,\langle\cdot,\cdot\rangle)$ can be viewed as the homogeneous Riemannian manifold $(G/H=G/\{e\},\langle\cdot,\cdot\rangle)$, associated with the bi-invariant orthogonal decomposition $\mathfrak{g}=\mathfrak{h}+\mathfrak{m}=0+\mathfrak{g}$.
As an immediate corollary of Lemma \ref{lemma-3}, the following lemma
 gives a condition for a vector $v\in\mathfrak{g}$ to generate a right invariant CK-vector field on $(G,\langle\cdot,\cdot\rangle)$.

\begin{lemma}\label{lemma-6}
The right invariant vector field
generated by $v\in\mathfrak{g}$ is a CK-vector field on $(G,\langle\cdot,\cdot\rangle)$ if and only if
\begin{equation*}
\langle\mathrm{Ad}(g)v,\mathrm{Ad}(g)v\rangle\equiv\mathrm{const},\quad \forall g\in G,
\end{equation*}
where $\langle\cdot,\cdot\rangle$ is the inner product on $\mathfrak{g}$.
Moreover, if $v\in\mathfrak{g}$ generates a CK-vector field on $(G,\langle\cdot,\cdot\rangle)$, then so does $\mathrm{Ad}(g)v$ for any $g\in G$.
\end{lemma}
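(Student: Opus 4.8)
The plan is to obtain this lemma as the specialization of Lemma~\ref{lemma-3} to the trivial isotropy case $H=\{e\}$. With $H=\{e\}$ the bi-invariant orthogonal decomposition becomes $\mathfrak{g}=\mathfrak{h}+\mathfrak{m}=0+\mathfrak{g}$, so that $\mathfrak{m}=\mathfrak{g}$ and the linear projection $\mathrm{pr}_\mathfrak{m}\colon\mathfrak{g}\to\mathfrak{m}$ is the identity map. The homogeneous manifold $G/H$ is then $G$ itself, and $(G/H,\langle\cdot,\cdot\rangle)$ is exactly the left invariant Riemannian Lie group $(G,\langle\cdot,\cdot\rangle)$, as was already noted just before the statement.

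First I would verify the conventions, which is the only genuinely delicate point. In the $G/H$ framework the Killing field attached to $v$ is $X(gH)=\tfrac{d}{dt}|_{t=0}(\exp tv\,g)H$; for $H=\{e\}$ its flow is $\phi_t(g)=\exp(tv)\,g$, that is, left translation by $\exp(tv)$. Since the infinitesimal generator of left translations is a right invariant vector field (one checks $\tfrac{d}{dt}|_{t=0}\exp(tv)g=(dR_g)_e(v)$, which is the right invariant extension of $v$), the field $X$ coincides with the right invariant vector field generated by $v$. Thus the object whose CK-property Lemma~\ref{lemma-3} detects is precisely the right invariant field in the present statement, and not the left invariant one.

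With these identifications in place, I would simply substitute $\mathrm{pr}_\mathfrak{m}=\mathrm{id}$ into the criterion of Lemma~\ref{lemma-3}: the condition $\langle\mathrm{pr}_\mathfrak{m}(\mathrm{Ad}(g)v),\mathrm{pr}_\mathfrak{m}(\mathrm{Ad}(g)v)\rangle\equiv\mathrm{const}$ collapses to $\langle\mathrm{Ad}(g)v,\mathrm{Ad}(g)v\rangle\equiv\mathrm{const}$ for all $g\in G$, which is exactly the claimed equivalence.

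For the final assertion I would observe that the constancy condition is invariant under replacing $v$ by $\mathrm{Ad}(g_0)v$ for any fixed $g_0\in G$: since $\langle\mathrm{Ad}(g)\mathrm{Ad}(g_0)v,\mathrm{Ad}(g)\mathrm{Ad}(g_0)v\rangle=\langle\mathrm{Ad}(gg_0)v,\mathrm{Ad}(gg_0)v\rangle$ and $g\mapsto gg_0$ permutes $G$, the function on the left is constant in $g$ precisely when the original one is. (Equivalently, this is the ``moreover'' clause of Lemma~\ref{lemma-3} read off in the case $\mathfrak{m}=\mathfrak{g}$.) The main obstacle here is conceptual rather than computational: only the left/right-invariance bookkeeping in the second step requires care, while the remaining steps are a direct substitution.
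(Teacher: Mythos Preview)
Your proposal is correct and matches the paper's approach exactly: the paper also views $(G,\langle\cdot,\cdot\rangle)$ as the homogeneous space $G/\{e\}$ with $\mathfrak{m}=\mathfrak{g}$ and states that Lemma~\ref{lemma-6} is an immediate corollary of Lemma~\ref{lemma-3}, without further argument. Your additional verification that the resulting Killing field is indeed the right invariant one (via the flow $\phi_t(g)=\exp(tv)g$) is a useful sanity check that the paper leaves implicit.
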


 In the case that $\langle\cdot,\cdot\rangle$ is not bi-invariant, nonzero right invariant CK-vector fields are generally  very rare. On the other hand, the following lemma, together with Theorem \ref{thm-1}, indicates that  they do exist.

\begin{lemma}\label{lemma-4}
Let $G$ be a connected compact  simple Lie group, and $H$ a closed connected subgroup of $G$, with $0<\dim H<\dim G$. Let $\mathfrak{g}=\mathfrak{h}+\mathfrak{m}$ be the bi-invariant orthogonal reductive decomposition for $G/H$. If the nonzero vector $v\in\mathfrak{g}$ generates a CK-vector field for the normal homogeneous Riemannian metric on $G/H$, then it also generates a right invariant CK-vector field for the following left invariant metrics on $G$:
$$\langle\cdot,\cdot\rangle_{\mathrm{bi}}|_{\mathfrak{h}\times\mathfrak{h}}
\oplus s\langle\cdot,\cdot\rangle_{\mathrm{bi}}|_{\mathfrak{m}\times\mathfrak{m}},
\quad\forall s>0.$$
\end{lemma}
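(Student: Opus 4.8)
The plan is to translate the normal-homogeneous CK-condition for $v$ on $G/H$ into the right-invariant CK-condition for $v$ on $G$ with the metric $\langle\cdot,\cdot\rangle_s := \langle\cdot,\cdot\rangle_{\mathrm{bi}}|_{\mathfrak{h}\times\mathfrak{h}}\oplus s\langle\cdot,\cdot\rangle_{\mathrm{bi}}|_{\mathfrak{m}\times\mathfrak{m}}$, using the two criteria already available. By Lemma \ref{lemma-3}, the hypothesis that $v$ generates a CK-vector field on the normal homogeneous $(G/H,\langle\cdot,\cdot\rangle_{\mathrm{bi}}|_{\mathfrak{m}})$ is equivalent to
\begin{equation*}
\langle\mathrm{pr}_\mathfrak{m}(\mathrm{Ad}(g)v),\mathrm{pr}_\mathfrak{m}(\mathrm{Ad}(g)v)\rangle_{\mathrm{bi}}\equiv c_1\quad\text{for all } g\in G,
\end{equation*}
for some constant $c_1$. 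By Lemma \ref{lemma-6}, what I must show is that
\begin{equation*}
\langle\mathrm{Ad}(g)v,\mathrm{Ad}(g)v\rangle_s\equiv\text{const}\quad\text{for all } g\in G.
\end{equation*}
So the whole lemma reduces to relating these two quantities.

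The key step is to decompose $\mathrm{Ad}(g)v$ along the bi-invariant orthogonal splitting $\mathfrak{g}=\mathfrak{h}+\mathfrak{m}$. Writing $\mathrm{pr}_\mathfrak{h}$ and $\mathrm{pr}_\mathfrak{m}$ for the two projections, which are orthogonal with respect to $\langle\cdot,\cdot\rangle_{\mathrm{bi}}$, the definition of $\langle\cdot,\cdot\rangle_s$ gives directly
\begin{equation*}
\langle\mathrm{Ad}(g)v,\mathrm{Ad}(g)v\rangle_s
=\langle\mathrm{pr}_\mathfrak{h}(\mathrm{Ad}(g)v),\mathrm{pr}_\mathfrak{h}(\mathrm{Ad}(g)v)\rangle_{\mathrm{bi}}
+s\langle\mathrm{pr}_\mathfrak{m}(\mathrm{Ad}(g)v),\mathrm{pr}_\mathfrak{m}(\mathrm{Ad}(g)v)\rangle_{\mathrm{bi}}.
\end{equation*}
The second summand is the constant $s\,c_1$ by the reformulated hypothesis. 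For the first summand I use that $\mathrm{Ad}(g)$ is an isometry of $\langle\cdot,\cdot\rangle_{\mathrm{bi}}$ (it is $\mathrm{Ad}(G)$-invariant), so $\langle\mathrm{Ad}(g)v,\mathrm{Ad}(g)v\rangle_{\mathrm{bi}}=\langle v,v\rangle_{\mathrm{bi}}$ is constant in $g$; combining this with the Pythagorean identity
\begin{equation*}
\langle\mathrm{pr}_\mathfrak{h}(\mathrm{Ad}(g)v),\mathrm{pr}_\mathfrak{h}(\mathrm{Ad}(g)v)\rangle_{\mathrm{bi}}
=\langle\mathrm{Ad}(g)v,\mathrm{Ad}(g)v\rangle_{\mathrm{bi}}
-\langle\mathrm{pr}_\mathfrak{m}(\mathrm{Ad}(g)v),\mathrm{pr}_\mathfrak{m}(\mathrm{Ad}(g)v)\rangle_{\mathrm{bi}}
=\langle v,v\rangle_{\mathrm{bi}}-c_1,
\end{equation*}
shows the $\mathfrak{h}$-part is also constant. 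Adding the two constants yields $\langle\mathrm{Ad}(g)v,\mathrm{Ad}(g)v\rangle_s\equiv(\langle v,v\rangle_{\mathrm{bi}}-c_1)+s\,c_1$, and Lemma \ref{lemma-6} then gives that $v$ generates a right invariant CK-vector field on $(G,\langle\cdot,\cdot\rangle_s)$.

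I do not expect a genuine obstacle here: the argument is essentially an orthogonality bookkeeping computation, and the only fact that needs care is that $\mathrm{pr}_\mathfrak{h}$ and $\mathrm{pr}_\mathfrak{m}$ are $\langle\cdot,\cdot\rangle_{\mathrm{bi}}$-orthogonal projections onto $\mathrm{Ad}(H)$-invariant subspaces, which is exactly the meaning of a bi-invariant orthogonal reductive decomposition as set up in Section 2. The one point worth stating explicitly, so the reader sees why the constant $c_1$ is the same in both criteria, is that the projection appearing in Lemma \ref{lemma-3} and the projection appearing in the decomposition of $\langle\cdot,\cdot\rangle_s$ are literally the same map $\mathrm{pr}_\mathfrak{m}$; once that is noted, the conclusion is immediate and holds for every $s>0$ simultaneously, which is precisely the uniformity asserted in the statement.
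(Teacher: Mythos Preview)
Your argument is correct and coincides with the paper's intended approach: the paper itself omits the proof, stating only that Lemma~\ref{lemma-4} is an immediate corollary of Lemma~\ref{lemma-6}, and your orthogonal-decomposition computation is precisely the unpacking of that remark.
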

The following lemma can be used to  find out more right invariant CK-vector fields.
\begin{lemma}\label{lemma-5}
Let $\mathfrak{l}$ be a subalgebra of $\mathfrak{g}=\mathrm{Lie}(G)$
and   $L$ be the corresponding connected Lie subgroup of $G$. If $v\in\mathfrak{l}$
generates a right invariant CK-vector field on the left invariant Riemannian
Lie group $(G,\langle\cdot,\cdot\rangle)$, then it also generates
a right invariant CK-vector field on the left invariant Riemannian Lie group
$(L,\langle\cdot,\cdot\rangle|_{\mathfrak{l}\times\mathfrak{l}})$.
\end{lemma}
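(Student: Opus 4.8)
The plan is to show that the CK-property, which is an intrinsic condition on the adjoint orbit of $v$ measured against the metric, survives restriction to the subgroup $L$ because both the relevant adjoint action and the restricted metric are simply the ambient ones cut down to $\mathfrak{l}$. First I would invoke Lemma \ref{lemma-6} in both directions: since $v\in\mathfrak{l}$ generates a right invariant CK-vector field on $(G,\langle\cdot,\cdot\rangle)$, the hypothesis is exactly that $\langle\mathrm{Ad}(g)v,\mathrm{Ad}(g)v\rangle$ is constant as $g$ ranges over $G$; and the conclusion I must reach, again by Lemma \ref{lemma-6} applied to $L$, is that $\langle\mathrm{Ad}(h)v,\mathrm{Ad}(h)v\rangle$ is constant as $h$ ranges over $L$, where now $\mathrm{Ad}$ denotes the adjoint action of $L$ and the inner product is $\langle\cdot,\cdot\rangle|_{\mathfrak{l}\times\mathfrak{l}}$.

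Next I would reconcile the two pieces of data. For $h\in L\subset G$, the adjoint action of $L$ on $\mathfrak{l}$ is the restriction of the adjoint action of $G$ on $\mathfrak{g}$: that is, $\mathrm{Ad}_L(h)v=\mathrm{Ad}_G(h)v$ for all $h\in L$ and $v\in\mathfrak{l}$. Since $\mathfrak{l}$ is a subalgebra, $\mathrm{Ad}_G(h)v$ stays inside $\mathfrak{l}$ for $h\in L$, so evaluating the ambient inner product $\langle\cdot,\cdot\rangle$ on this vector gives the same number as evaluating the restricted inner product $\langle\cdot,\cdot\rangle|_{\mathfrak{l}\times\mathfrak{l}}$ on it. Therefore the function $h\mapsto\langle\mathrm{Ad}_L(h)v,\mathrm{Ad}_L(h)v\rangle|_{\mathfrak{l}}$ on $L$ is literally the restriction to $L\subset G$ of the function $g\mapsto\langle\mathrm{Ad}_G(g)v,\mathrm{Ad}_G(g)v\rangle$ on $G$. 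The latter is constant by hypothesis, hence so is its restriction, which is exactly the criterion of Lemma \ref{lemma-6} for $(L,\langle\cdot,\cdot\rangle|_{\mathfrak{l}\times\mathfrak{l}})$.

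I would close by noting the two small points of hygiene that make the argument rigorous. One must confirm that $\langle\cdot,\cdot\rangle|_{\mathfrak{l}\times\mathfrak{l}}$ is a genuine inner product on $\mathfrak{l}$, which is immediate since the restriction of a positive definite symmetric bilinear form to a subspace is again positive definite; this guarantees $(L,\langle\cdot,\cdot\rangle|_{\mathfrak{l}\times\mathfrak{l}})$ is a bona fide left invariant Riemannian Lie group to which Lemma \ref{lemma-6} applies. One must also record the harmless subtlety that $L$ need not be compact or even closed in $G$, but Lemma \ref{lemma-6} is a purely infinitesimal criterion phrased in terms of the adjoint action, so no compactness is needed for its application.

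Honestly, this statement is essentially a tautology once the restriction formula $\mathrm{Ad}_L(h)=\mathrm{Ad}_G(h)|_{\mathfrak{l}}$ is written down, so there is no genuine obstacle; the only thing requiring a moment's care is verifying that the CK-criterion in Lemma \ref{lemma-6} is indeed insensitive to the ambient group, i.e.\ that restricting the index set of $g$ from $G$ to $L$ preserves constancy (it trivially does, since a constant function restricts to a constant function). The real content is organizational: one is simply observing that the hypothesis for $G$ supplies constancy over a set ($G$) containing the set ($L$) over which constancy is needed for the conclusion.
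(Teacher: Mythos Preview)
Your proposal is correct and follows exactly the paper's approach: the paper states that Lemma~\ref{lemma-5} is an immediate corollary of Lemma~\ref{lemma-6} (noting that the compactness assumption there is not actually needed) and omits the proof. Your argument is precisely the spelled-out version of this one-line observation.
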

Both Lemma \ref{lemma-4} and Lemma \ref{lemma-5} are immediate corollaries of
Lemma \ref{lemma-6} (which does not really need the compactness condition), so the proofs are omitted.

\begin{example}\label{example-1}Let $\mathfrak{g}=B_3$,
$\mathfrak{h}=G_2$, and $\mathfrak{m}$ be as in Section \ref{section-2-4}. Consider the subalgebra
$$\mathfrak{sp}(2)=\mathbb{R}e_1+\mathbb{R}e_2+\mathfrak{g}_{\pm e_1}+\mathfrak{g}_{\pm e_2}+\mathfrak{g}_{\pm(e_1+e_2)}+
\mathfrak{g}_{\pm(e_1-e_2)}$$
of $\mathfrak{g}$, which generates the subgroup $\mathrm{Sp}(2) =\mathrm{Spin}(5)$ of $G=\mathrm{Spin}(7)$. Let
$$\langle\cdot,\cdot\rangle_s=
(\langle\cdot,\cdot\rangle_{\mathrm{bi}}|_{\mathfrak{h}\times\mathfrak{h}}
\oplus s\langle\cdot,\cdot\rangle_{\mathrm{bi}}|_{\mathfrak{m}\times\mathfrak{m}})
|_{\mathfrak{sp}(2)\times\mathfrak{sp}(2)},
\quad\forall s>0,$$
be a family of inner products on $\mathfrak{sp}(2)$ and consider the corresponding left invariant metrics on $\mathrm{Sp}(2)$.
 Then by Example \ref{example-3}, Lemma \ref{lemma-4} and Lemma \ref{lemma-5}, $v=e_1\in\mathfrak{sp}(2)$ generates a right invariant CK-vector field on
$(\mathrm{Sp}(2),\langle\cdot,\cdot\rangle_s)$. Notice that when $s\neq1$, $\langle\cdot,\cdot\rangle_s$ is not bi-invariant.
\end{example}

\section{Counter examples for Homogeneity Conjecture}
%
\label{sub-section-4-2}
Through out this section, we assume that $(G,\langle\cdot,\cdot\rangle_{\mathrm{bi}})$ is the Lie group $\mathrm{Spin}(7)$ endowed with a bi-invariant Riemannian metric. Let
$H=\mathrm{G}_2$ be the connected closed subgroup of $G$, such that the Lie algebras $\mathfrak{g}$ and $\mathfrak{h}$ of $G$ and $H$, respectively, and inner product $\langle\cdot,\cdot\rangle_{\mathrm{bi}}$ are the same as  in Section \ref{section-2-4}. Let $(\mathrm{Sp}(2),\langle\cdot,\cdot\rangle_s)$
with $s>0$ be the left invariant Riemannian $\mathrm{Sp}(2)$ in Example \ref{example-1}.
Denote by $C$ the centralizer of $v=e_1\in\mathfrak{sp}(2)$ in $\mathrm{Sp}(2)$, and by $K_s$ the connected closed subgroup of $\mathrm{Sp}(2)$ such that $I_0(\mathrm{Sp}(2),\langle\cdot,\cdot\rangle_s)
=L(\mathrm{Sp}(2))R(K_s)$.

\begin{lemma}\label{lemma-7}
For any $s\neq 1$,
the action of $C\times K_s$ on $\mathrm{Sp}(2)$, defined by $((g_1,g_2),g)\in ((C\times K_s)\times \mathrm{Sp}(2))\mapsto g_1gg_2^{-1}$, is not transitive.
\end{lemma}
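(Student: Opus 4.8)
\noindent The plan is to reduce transitivity to a statement about Lie algebras and then to refute that statement by a short computation in the root-space decomposition of $\mathfrak{sp}(2)$. Write $\mathfrak{c}=\mathrm{Lie}(C)$ and $\mathfrak{k}_s=\mathrm{Lie}(K_s)$. Since $C$ and $K_s$ are connected, the given action is transitive if and only if the orbit through the identity $e$, i.e. the product set $CK_s$, is all of $\mathrm{Sp}(2)$. This orbit is the image of the orbit map $(g_1,g_2)\mapsto g_1g_2^{-1}$, which is equivariant and whose differential at $(e,e)$ sends $(X,Y)\in\mathfrak{c}\oplus\mathfrak{k}_s$ to $X-Y$; hence the rank is constant and the orbit has dimension $\dim(\mathfrak{c}+\mathfrak{k}_s)$. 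As $\dim\mathfrak{sp}(2)=10$, it suffices to prove that $\mathfrak{c}+\mathfrak{k}_s\neq\mathfrak{sp}(2)$ whenever $s\neq1$.

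First I would identify $\mathfrak{c}$, the centralizer of $v=e_1$ in $\mathfrak{sp}(2)$. By the bracket normalization recalled in Section \ref{section-2-4}, a root vector in $\mathfrak{g}_{\pm\alpha}$ commutes with $e_1$ exactly when $\langle e_1,\alpha\rangle_{\mathrm{bi}}=0$. Among the roots $e_1,\ e_2,\ e_1+e_2,\ e_1-e_2$ of $\mathfrak{sp}(2)$ this holds only for $\alpha=e_2$, so $\mathfrak{c}=\mathbb{R}e_1+\mathbb{R}e_2+\mathfrak{g}_{\pm e_2}$. In particular $\mathfrak{c}$ has no component in the root plane $\mathfrak{g}_{\pm e_1}$.

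The heart of the argument is to control $\mathfrak{k}_s$. Every element of $K_s$ preserves $\langle\cdot,\cdot\rangle_s$ by definition, and also preserves $\langle\cdot,\cdot\rangle_{\mathrm{bi}}|_{\mathfrak{sp}(2)}$ since it lies in $\mathrm{Sp}(2)$; hence $\mathrm{Ad}(K_s)$ preserves the symmetric bilinear form $\gamma=\langle\cdot,\cdot\rangle_s-\langle\cdot,\cdot\rangle_{\mathrm{bi}}$ on $\mathfrak{sp}(2)$, which by the construction of $\langle\cdot,\cdot\rangle_s$ equals $(s-1)\langle\mathrm{pr}_\mathfrak{m}\cdot,\mathrm{pr}_\mathfrak{m}\cdot\rangle_{\mathrm{bi}}$. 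For $s\neq1$ the radical of $\gamma$ is exactly $\{X\in\mathfrak{sp}(2):\mathrm{pr}_\mathfrak{m}X=0\}=\mathfrak{sp}(2)\cap\mathfrak{h}$, which by Lemma \ref{lemma-1} together with the Cartan description in Section \ref{section-2-4} equals $\mathfrak{s}_0:=\mathbb{R}(e_1-e_2)+\mathfrak{g}_{\pm(e_1-e_2)}$. A linear isomorphism preserving $\gamma$ preserves its radical, so $\mathrm{Ad}(K_s)$ preserves $\mathfrak{s}_0$ and therefore $[\mathfrak{k}_s,\mathfrak{s}_0]\subseteq\mathfrak{s}_0$. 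Evaluating this on $e_1-e_2\in\mathfrak{s}_0$ and writing $X\in\mathfrak{k}_s$ in root components, the $\mathfrak{g}_{\pm e_1}$-part of $[X,e_1-e_2]$ is the image of the $\mathfrak{g}_{\pm e_1}$-part of $X$ under the invertible map $-\mathrm{ad}(e_1-e_2)|_{\mathfrak{g}_{\pm e_1}}$ (invertible because $\langle e_1-e_2,e_1\rangle_{\mathrm{bi}}=1\neq0$); since $\mathfrak{s}_0\cap\mathfrak{g}_{\pm e_1}=\{0\}$, this part must vanish. Thus $\mathfrak{k}_s$, like $\mathfrak{c}$, has no component in $\mathfrak{g}_{\pm e_1}$.

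Consequently $\mathfrak{c}+\mathfrak{k}_s$ is contained in the sum of all Cartan and root-plane summands of $\mathfrak{sp}(2)$ other than $\mathfrak{g}_{\pm e_1}$, namely in $\mathbb{R}e_1+\mathbb{R}e_2+\mathfrak{g}_{\pm e_2}+\mathfrak{g}_{\pm(e_1+e_2)}+\mathfrak{g}_{\pm(e_1-e_2)}$, a proper $8$-dimensional subspace of the $10$-dimensional $\mathfrak{sp}(2)$. Hence $\mathfrak{c}+\mathfrak{k}_s\neq\mathfrak{sp}(2)$, the orbit through $e$ is a proper submanifold, and the action is not transitive for $s\neq1$. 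I expect the only real obstacle to be the third step: correctly translating the definition of $K_s$ (via isometric right translations) into the invariance of the form $\gamma$ and pinning down its radical as $\mathfrak{sp}(2)\cap\mathfrak{h}$ through Lemma \ref{lemma-1}; once that structural point is settled, the remaining root-vector bookkeeping is routine.
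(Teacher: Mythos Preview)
Your proof is correct and rests on the same key insight as the paper's: since $\mathrm{Ad}(K_s)$ preserves both $\langle\cdot,\cdot\rangle_s$ and $\langle\cdot,\cdot\rangle_{\mathrm{bi}}$, it must preserve $\mathfrak{sp}(2)\cap\mathfrak{h}$, so $\mathfrak{k}_s$ lies in the normalizer of this subalgebra and a dimension count finishes. The implementations differ in two minor but pleasant ways. First, the paper characterizes $\mathfrak{sp}(2)\cap\mathfrak{h}$ as the locus where the ratio $\langle u,u\rangle_s/\langle u,u\rangle_{\mathrm{bi}}$ attains its extremum, which leads it to treat $s>1$ and $0<s<1$ separately; your description of this subspace as the radical of $\gamma=\langle\cdot,\cdot\rangle_s-\langle\cdot,\cdot\rangle_{\mathrm{bi}}$ handles both cases at once. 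Second, the paper computes the full normalizer $\mathfrak{k}=\mathbb{R}e_1+\mathbb{R}e_2+\mathfrak{g}_{\pm(e_1+e_2)}+\mathfrak{g}_{\pm(e_1-e_2)}$ and then splits into the cases $\dim K_s<6$ and $\dim K_s=6$ (in the latter case using $\dim(C\cap K_s)>0$); you bypass this by noting directly that neither $\mathfrak{c}$ nor $\mathfrak{k}_s$ meets the root plane $\mathfrak{g}_{\pm e_1}$, so $\mathfrak{c}+\mathfrak{k}_s$ sits inside an $8$-dimensional subspace. The content is the same, but your packaging is a bit more economical.
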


\begin{proof}Assume $s>1$. Lemma \ref{lemma-1} implies that
$\mathfrak{sp}(2)\cap\mathfrak{h}=\mathbb{R}(e_1-e_2)+
\mathfrak{g}_{\pm(e_1-e_2)}\neq0$.
Then we have
$\min_{u\in\mathfrak{sp}(2)\backslash\{0\}}\tfrac{\langle u,u\rangle_s}{\langle u,u\rangle_\mathrm{bi}}=1$,
which is achieved exactly by the punctured subspace $\mathfrak{sp}(2)\cap\mathfrak{h}\backslash\{0\}$.
Since the $\mathrm{Ad}(K_s)$-actions preserve both $\langle\cdot,\cdot\rangle_s$
and $\langle\cdot,\cdot\rangle_\mathrm{bi}$, they preserve $\mathfrak{sp}(2)\cap\mathfrak{h}$ as well. Thus $\mathfrak{k}_s=\mathrm{Lie}(K_s)$ is contained in the normalizer $$\mathfrak{k}=\mathbb{R}e_1+\mathbb{R}e_2+\mathfrak{g}_{\pm(e_1+e_2)}+
\mathfrak{g}_{\pm(e_1-e_2)}$$ of $\mathfrak{sp}(2)\cap\mathfrak{h}$ in $\mathfrak{sp}(2)$. Therefore $\dim K_s\leq 6$, and the equality holds if and only if
$\mathfrak{k}_s$ coincides with $\mathfrak{k}$.

It is obvious that $\mathfrak{c}=\mathrm{Lie}(C)=\mathbb{R}e_1+\mathbb{R}e_2+
\mathfrak{g}_{\pm e_2}$, i.e., $\dim C=4$. If $\dim K_s <6$, then $\dim C+\dim K_s<10=\dim \mathrm{Sp}(2)$. So in this case, the $C\times K_s$-action on $\mathrm{Sp}(2)$ is not transitive. If $\dim K_s=6$, i.e., $\mathfrak{k}_s=\mathfrak{k}$, then
$\mathfrak{c}\cap\mathfrak{k}_s=\mathbb{R}e_1
+\mathbb{R}e_2$ has a positive dimension, i.e., $\dim (C\cap K_s)>0$. The $C\times K_s$-orbit of the identity, $(C\times K_s)\cdot e=CK_s$, is equivariantly diffeomorphic to $C\times K_s/\Delta(C\cap K_s)$,  where $ \Delta(C\cap K_s)=\{(g,g)\ |\ \forall g\in C\cap K_s\}$. Then we have
$$\dim CK_s=\dim C+\dim K_s-\dim C\cap K_s=8<10.$$
So in this case, the $C\times K_s$-action on $\mathrm{Sp}(2)$ is not transitive either.
This completes the  proof of the lemma for $s>1$. For $0<s<1$, the proof is similar.
\end{proof}

For the convenience in later discussion, we will use quaternionic matrices to denote elements in $\mathrm{Sp}(2)$ and $\mathfrak{sp}(2)$, i.e.,
$$\mathrm{Sp}(2)=\{g\in\mathbb{H}^{2\times2}\ |\ \overline{g}^t g=e=\mathrm{diag}(1,1)\}\quad\mbox{and}\quad
\mathfrak{sp}(2)=\{u\in\mathbb{H}^{2\times2}\ |\ \overline{u}^t+ u=0\}, $$
in which
$\mathbb{H}=\mathbb{R}+\mathbb{R}\mathbf{i}+\mathbb{R}\mathbf{j}+\mathbb{R}\mathbf{k}$
is the ring of quarternions.
We choose the Cartan subalgebra $\mathfrak{t}$ of $\mathfrak{g}=B_3$ such that $\mathfrak{t}\cap\mathfrak{g}'=\{\mathrm{diag}(a\mathbf{i},\mathbb{R}\mathbf{i})\ |\
a,b\in\mathbb{R}\}$ of $\mathfrak{g}'$.
Up to a suitbale scalar, we may present $v=e_1$ as
$v=\mathrm{diag}(\mathbf{i},\mathbf{i})\in\mathfrak{sp}(2)$, and assume that the bi-invariant inner product
$\langle\cdot,\cdot\rangle_1$ on $\mathfrak{sp}(2)$ satisfies
$$\langle u,u\rangle_1=\tfrac12\mathrm{tr}(\overline{u}^t u),\quad\forall u\in\mathfrak{sp}(2).$$
We have observed in Example \ref{example-1} that,
for each $s>0$, $v$ induces a right invariant CK-vector field on
$(\mathrm{Sp}(2),\langle\cdot,\cdot\rangle_s)$.  In particular, if $s=1$, then the length of that CK-vector field is constantly $1$.

The following obvious observation will be  useful.
\bigskip

\noindent{\bf Observation 1}:
For any matrix $u$ in $\mathrm{Sp}(2)$ or $\mathfrak{sp}(2)$, the following statements are equivalent:
\begin{enumerate}
\item $u$ commutes with $v$;
\item $u$ commutes with $\exp(tv)$ for some $t\in(-\pi,0)\cup(0,\pi)$;
\item $u$ commutes with $\exp(tv)$ for all $t\in\mathbb{R}$.
\end{enumerate}
\bigskip

\begin{lemma}\label{lemma-9}
Given  $t_0\in(0,\pi)$ ($t_0\in(-\pi,0)$) and $g\in \mathrm{Sp}(2)$,
 with respect to the bi-invariant metric $\langle\cdot,\cdot\rangle_1$, the curve $c(t)=\exp (t\mathrm{Ad}(g)v)$,  $0\leq t\leq t_0$ (resp. ($c(t)=\exp (-t\mathrm{Ad}(g)v)$),  $0\leq t\leq -t_0$) is the
unique shortest unit speed geodesic from $e$ to $\exp (t_0\mathrm{Ad}(g)v)$.
\end{lemma}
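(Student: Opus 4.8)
The plan is to use that $\langle\cdot,\cdot\rangle_1$ is bi-invariant, so that inner automorphisms are isometries and the Riemannian exponential coincides with the Lie-theoretic $\exp$; in particular the geodesics through $e$ are exactly the one-parameter subgroups. First I would reduce to the case $g=e$. For any $g$, the inner automorphism $C_g\colon x\mapsto gxg^{-1}$ is an isometry of $(\mathrm{Sp}(2),\langle\cdot,\cdot\rangle_1)$ fixing $e$ and satisfying $C_g(\exp(tv))=\exp(t\,\mathrm{Ad}(g)v)$. Hence $c(t)=\exp(t\,\mathrm{Ad}(g)v)$ is the (unique) shortest geodesic from $e$ to $\exp(t_0\,\mathrm{Ad}(g)v)$ if and only if $\exp(tv)$ is the (unique) shortest geodesic from $e$ to $\exp(t_0 v)$, and the case $t_0\in(-\pi,0)$ follows by replacing $v$ with $-v$. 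So it suffices to treat $h:=\exp(t_0 v)$ with $t_0\in(0,\pi)$. Since $v^2=-e$ and $\langle v,v\rangle_1=1$, the curve $\exp(tv)=\cos t\,e+\sin t\,v$ is a unit-speed geodesic and $h=\cos t_0\,e+\sin t_0\,v$.

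Next I would invoke the standard description of minimizing geodesics for a bi-invariant metric: the unit-speed minimizing geodesics from $e$ to $h$ are precisely the curves $t\mapsto\exp(tX/|X|)$, where $X\in\mathfrak{sp}(2)$ runs over the vectors of minimal norm with $\exp X=h$, and $d(e,h)=\min\{\,|X|\colon \exp X=h\,\}$. Thus the lemma becomes the algebraic assertion that $t_0 v$ is the unique minimizer of $|X|$ among all $X$ with $\exp X=h$. To compute the minimum I would diagonalize: every $X\in\mathfrak{sp}(2)$ is $\mathrm{Ad}$-conjugate to $\mathrm{diag}(a\mathbf{i},b\mathbf{i})$ in the Cartan subalgebra, with $|X|^2=\tfrac12(a^2+b^2)$, and $\exp X=h$ forces $\mathrm{diag}(e^{a\mathbf{i}},e^{b\mathbf{i}})$ to be conjugate to $\mathrm{diag}(e^{t_0\mathbf{i}},e^{t_0\mathbf{i}})$, i.e. $\cos a=\cos b=\cos t_0$ by the Weyl-group description of torus conjugacy. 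Since $t_0\in(0,\pi)$, this yields $|a|,|b|\ge t_0$, so $a^2+b^2\ge 2t_0^2$ with equality exactly when $a,b\in\{\pm t_0\}$; hence $d(e,h)=t_0$.

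It then remains to identify the actual minimal-norm preimages, not merely up to conjugacy. From the diagonalization, every such $X$ is conjugate to $\mathrm{diag}(\pm t_0\mathbf{i},\pm t_0\mathbf{i})$ and therefore satisfies $X^2=-t_0^2\,e$, so $\exp X=\cos t_0\,e+\tfrac{\sin t_0}{t_0}X$. Equating this with $h=\cos t_0\,e+\sin t_0\,v$ and cancelling $\sin t_0\neq0$ forces $X=t_0 v$, proving that $t_0 v$ is the unique minimal-norm preimage and completing the argument. The point requiring care, and the main obstacle, is exactly this last step: a priori the minimal-norm preimages could sweep out a positive-dimensional $\mathrm{Ad}$-orbit, since $t_0 v$ is not central and its conjugacy orbit is nontrivial, and it is the identity $X^2=-t_0^2\,e$ together with $\sin t_0\neq0$ that collapses this orbit to the single vector $t_0 v$. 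Geometrically this reflects that the cut point in the direction $v$ is the central element $\exp(\pi v)=-e$, reached by many length-$\pi$ geodesics (for instance through $\mathrm{diag}(\mathbf{j},\mathbf{j})$), while the first conjugate point occurs only at $t=2\pi$ because the nonzero eigenvalues of $\mathrm{ad}_v$ have modulus $1$; this corroborates that the cut time equals $\pi$ and that $\exp(tv)$ is uniquely minimizing precisely for $t<\pi$.
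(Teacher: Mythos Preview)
Your proof is correct and follows the same overall strategy as the paper: reduce to $g=e$ by bi-invariance, diagonalize a competing geodesic direction into the maximal torus, and match eigenvalues via the embedding $\mathbb{H}^{2\times2}\hookrightarrow\mathbb{C}^{4\times4}$ (equivalently, via the Weyl group) to obtain $|a|,|b|\ge t_0$ and hence minimality. The one substantive difference is in the uniqueness step. The paper argues that equality forces $a=b=1$, so the conjugating element $g_1$ commutes with $\exp(t_0 v)$, and then appeals to Observation~1 to conclude that $g_1$ commutes with $v$ itself, whence $c_1\equiv c$. You instead observe that any minimal-norm preimage $X$ satisfies $X^2=-t_0^2\,e$, compute $\exp X=\cos t_0\,e+\tfrac{\sin t_0}{t_0}X$ in closed form, and read off $X=t_0 v$ from $\sin t_0\neq0$. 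Your argument is slightly more self-contained, avoiding Observation~1 altogether; the paper's argument, on the other hand, makes the role of the centralizer of $v$ explicit, which dovetails with the later use of $C$ in Lemma~\ref{lemma-7} and Lemma~\ref{lemma-11}.
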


\begin{proof}
 Since the metric is bi-invariant, it is enough to  prove the lemma for  $g=e$. We will only prove the assertion for $t_0\in(0,\pi)$, because the proof for the other case is similar.

First, by the bi-invariance of the metric, and the fact that
$\langle v,v\rangle_1=1$, the curve
$c(t)=\exp (tv)$ is a unit speed geodesic.

Next, we prove that $c(t)$, $0\leq t\leq t_0$ is the shortest geodesic from $e$ to $\exp(t_0v)$,  hence it  realizes the distance $t_0$
between $e$ and  $\exp(t_0v)$.
Suppose $c_1(t)$,  $0\leq t\leq t_1$ is a
unit speed geodesic from $e$ to $\exp t_0 v$, with length  $t_1$. Then by the  conjugation theorem we can write
$$c_1(t)=\exp(t \mathrm{Ad}(g_1)\mathrm{diag}(a\mathbf{i},b\mathbf{i}))=
g_1(\exp(t\mathrm{diag}(a\mathbf{i},b\mathbf{i}))g_1^{-1},$$
where $a\geq b\geq0$,  $a^2+b^2=2$, and $g_1$ is an element in $\mathrm{Sp}(2)$.
Thus  we have
\begin{equation}\label{003}
\mathrm{diag}(\exp(t_0\mathbf{i}),\exp(t_0\mathbf{i}))=c(t_0)=c_1(t_1)=
g_1\mathrm{diag}(\exp(at_1\mathbf{i}),\exp(bt_1\mathbf{i}))g_1^{-1}.
\end{equation}
Through the canonical imbedding $\mathbb{H}^{2\times 2}\hookrightarrow\mathbb{C}^{4\times 4}$,
the eigenvalues of the right side of (\ref{003}), i.e., $\exp(\pm at_1\sqrt{-1})$ and $\exp(\pm bt_1\sqrt{-1})$, must coincide with those
of the left side, i.e., $\exp(\pm t_0\sqrt{-1})$. Therefore
we  have
$at_1\equiv \pm t_0$ (mod $2\pi$),
and $bt_1\equiv \pm t_0$ (mod $2\pi$). The condition $t_0\in(0,\pi)$ implies that
$at_1\geq t_0$ and $bt_1\geq t_0$, hence   $t_1=\sqrt{\tfrac{a^2+b^2}{2}t_1^2}\geq\sqrt{\tfrac{2t_0^2}{2}}=t_0$.
Thus  $c(t)$, $0\leq t\leq t_0$ is the shortest geodesic from $e$ to $\exp(t_0v)$.

Finally, we prove the uniqueness. Suppose $c_1(t)$, $0\leq t\leq t_1$ is an unit speed shortest geodesic from $e$ to $\exp(t_0v)$. Then $t_1=t_0$, and above argument implies  $a=b=1$.
 Moreover, $g_1$ commutes with
$\mathrm{diag}(\exp(t_0\mathbf{i}),\exp(t_0\mathbf{i}))$. By Observation 1, $g_1$ commutes with $\mathrm{diag}(\mathbf{i},\mathbf{i}))$. Thus  $c_1(t)\equiv\exp(tv)\equiv c_0(t)$.
This completes the proof of the lemma.
\end{proof}

We fix a positive integer $n$, and denote by $\Gamma$ the subgroup $\mathbb{Z}_{2n+1}$ of the one-parameter subgroup $\exp(\mathbb{R}v)\subset \mathrm{Sp}(2)$.  It is easily seen that the elements of
$\Gamma$ can be expressed as $$\exp(\tfrac{k\pi}{2n+1}v)=\mathrm{diag}(\exp(\tfrac{k\pi}{2n+1}\mathbf{i}),
 \exp(\tfrac{k\pi}{2n+1}\mathbf{i})),\quad  k=-n,-n+1,\cdots,n-1,n.
$$
Then we have the Riemannian quotients $(\Gamma\backslash \mathrm{Sp}(2),\langle\cdot,\cdot\rangle_s)$, i.e., for each $s>0$, the quotient map from $(\mathrm{Sp}(2),\langle\cdot,\cdot\rangle_s)$ to
$(\Gamma\backslash\mathrm{Sp}(2),\langle\cdot,\cdot\rangle_s)$ is locally isometric.

\begin{lemma}\label{lemma-11}
If $s>0$ and $s\neq1$, then the Riemannian quotient $(\Gamma\backslash G',\langle\cdot,\cdot\rangle_s)$ is not  homogeneous.
\end{lemma}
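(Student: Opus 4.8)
The plan is to reduce the non-homogeneity to Lemma~\ref{lemma-7} by identifying the identity component of the isometry group of the quotient. Since $\mathrm{Sp}(2)$ is simply connected and $\Gamma$ acts freely and properly discontinuously by left translations, $\pi\colon\mathrm{Sp}(2)\to\Gamma\backslash\mathrm{Sp}(2)$ is the universal Riemannian covering with deck group $L(\Gamma)$. Hence every isometry of the quotient lifts to an isometry of $\mathrm{Sp}(2)$ normalizing $L(\Gamma)$, and this lifting induces an isomorphism $I(\Gamma\backslash\mathrm{Sp}(2))\cong N_{I(\mathrm{Sp}(2),\langle\cdot,\cdot\rangle_s)}(L(\Gamma))/L(\Gamma)$. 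Because a homogeneous Riemannian manifold is already transitive under the identity component of its isometry group, it suffices to prove that $I_0(\Gamma\backslash\mathrm{Sp}(2))$ does not act transitively.

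First I would determine which elements of $I_0(\mathrm{Sp}(2),\langle\cdot,\cdot\rangle_s)=L(\mathrm{Sp}(2))R(K_s)$ normalize $L(\Gamma)$. A direct computation shows that $L(a)R(b)$, with $a\in\mathrm{Sp}(2)$ and $b\in K_s$, conjugates $L(\gamma)$ to $L(a\gamma a^{-1})$; thus $L(a)R(b)$ normalizes $L(\Gamma)$ if and only if $a$ normalizes $\Gamma$ in $\mathrm{Sp}(2)$. Next I would identify the identity component of this normalizer. Since $\Gamma$ is finite, $N_{\mathrm{Sp}(2)}(\Gamma)/Z_{\mathrm{Sp}(2)}(\Gamma)$ embeds into the finite group $\mathrm{Aut}(\Gamma)$, so $N_{\mathrm{Sp}(2)}(\Gamma)_0=Z_{\mathrm{Sp}(2)}(\Gamma)_0$. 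Writing $\Gamma=\langle\exp(t_0 v)\rangle$ with $t_0\in(0,\pi)$ and applying Observation~1, centralizing the generator is equivalent to centralizing $v$, whence $Z_{\mathrm{Sp}(2)}(\Gamma)=C$, which is connected. Together with the connectedness of $K_s$, this shows that $I_0(\Gamma\backslash\mathrm{Sp}(2))$ is exactly the image of $\{L(a)R(b)\mid a\in C,\ b\in K_s\}$, acting on the quotient by $\Gamma g\mapsto\Gamma agb^{-1}$.

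Finally I would derive the contradiction. If this action were transitive, then the orbit of the identity coset $\Gamma e$ would be the whole quotient, i.e. $\bigcup_{a\in C,\,b\in K_s,\,\gamma\in\Gamma}\gamma a b^{-1}=\mathrm{Sp}(2)$. Since $\Gamma\subset\exp(\mathbb{R}v)\subset C$, each product $\gamma a$ again lies in $C$, so this union equals the orbit $CK_s$ of $e$ under the $C\times K_s$-action $((g_1,g_2),g)\mapsto g_1gg_2^{-1}$. Thus transitivity of $I_0(\Gamma\backslash\mathrm{Sp}(2))$ would force $C\times K_s$ to act transitively on $\mathrm{Sp}(2)$, contradicting Lemma~\ref{lemma-7} for $s\neq1$. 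Hence $I_0(\Gamma\backslash\mathrm{Sp}(2))$ is not transitive and the quotient is not homogeneous.

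The main obstacle is the middle step: rigorously setting up the lifting correspondence between isometries of the quotient and the normalizer of the deck group, and then pinning down its identity component as $L(C)R(K_s)$ via the finiteness of $\mathrm{Aut}(\Gamma)$ and Observation~1. Once the identity component of the quotient's isometry group is shown to be governed by $C\times K_s$, the reduction to the already-established Lemma~\ref{lemma-7} --- crucially using $\Gamma\subset C$, so that left multiplication by $\Gamma$ does not enlarge the $C\times K_s$-orbit --- is straightforward.
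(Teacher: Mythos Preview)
Your proposal is correct and follows essentially the same route as the paper: lift isometries of the quotient to the universal cover, identify the identity component of the relevant subgroup as $L(C)R(K_s)$ via Observation~1, and then invoke Lemma~\ref{lemma-7}. The only differences are cosmetic---you pass through the normalizer of $L(\Gamma)$ and reduce to the centralizer via the finiteness of $\mathrm{Aut}(\Gamma)$, and you make explicit the step $\Gamma\subset C$ that converts transitivity on the quotient into transitivity on $\mathrm{Sp}(2)$---whereas the paper states directly that $I_0(\Gamma\backslash\mathrm{Sp}(2),\langle\cdot,\cdot\rangle_s)$ is locally isomorphic to the centralizer of $L(\Gamma)$ in $I_0(\mathrm{Sp}(2),\langle\cdot,\cdot\rangle_s)$ and concludes.
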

\begin{proof}
It is obvious that connected isometry group
$I_0(\Gamma\backslash \mathrm{Sp}(2),\langle\cdot,\cdot\rangle_s)$
is locally isomorphic to the centralizer of $L(\Gamma)$ in $I_0(\mathrm{Sp}(2),\langle\cdot,\cdot\rangle_s)$. We may present $I_0(\mathrm{Sp}(2),\langle\cdot,\cdot\rangle_s)$ as $L(\mathrm{Sp}(2))R(K_s)$, in which
$R(K_s)$ always commutes with $L(\Gamma)$, and by Observation 1,
the centralizer of $\Gamma=L(\Gamma)$ in $\mathrm{Sp}(2)=L(\mathrm{Sp}(2))$ coincides with the
centralizer $C$ of $v$. Thus the Lie transformation group $I_0(\mathrm{Sp}(2),\langle\cdot,\cdot\rangle_s)$
is locally isomorphic to $C\times K_s$, such that $C$ and $K_s$ acts on $\mathrm{Sp}(2)$ by left and right translations respectively. Now Lemma \ref{lemma-7} indicates that this action is not transitive when $s\neq1$, i.e., $(\Gamma\backslash \mathrm{Sp}(2),\langle\cdot,\cdot\rangle_s)$ is not homogeneous in this situation.
\end{proof}

\begin{lemma}\label{lemma-12}
Let $\rho$ be the left translation $L(\exp (t_0v))$ on $\mathrm{Sp}(2)$
with $t_0\in(-\pi,\pi)$. Then $\rho$ is a CW-translation on $(\mathrm{Sp}(2),\langle\cdot,\cdot\rangle_s)$ when $s$ is sufficiently close to 1.
\end{lemma}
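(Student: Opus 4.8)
The plan is to reduce the assertion to a statement about lengths of one–parameter subgroups and then run a compactness argument anchored at $s=1$. Assume $t_0\in(0,\pi)$, the case $t_0\in(-\pi,0)$ being symmetric, and write $d_s$ for the distance of $\langle\cdot,\cdot\rangle_s$. Using the left invariance of $\langle\cdot,\cdot\rangle_s$, for every $g\in\mathrm{Sp}(2)$ one has
\begin{equation*}
d_s\bigl(g,\exp(t_0v)g\bigr)=d_s\bigl(e,g^{-1}\exp(t_0v)g\bigr)
=d_s\bigl(e,\exp(t_0\,\mathrm{Ad}(g^{-1})v)\bigr),
\end{equation*}
so $\rho$ is a CW-translation exactly when $d_s(e,\exp(t_0w))$ is independent of $w$ as $w$ ranges over the compact adjoint orbit $\mathcal{O}=\{\mathrm{Ad}(h)v\mid h\in\mathrm{Sp}(2)\}$. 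By Example \ref{example-1} and Lemma \ref{lemma-6}, each $w\in\mathcal{O}$ again generates a right invariant CK-vector field on $(\mathrm{Sp}(2),\langle\cdot,\cdot\rangle_s)$; hence its integral curve $\beta_w(t)=\exp(tw)$ through $e$ is a geodesic of constant speed, and Lemma \ref{lemma-6} gives that this speed equals $\ell_s:=\langle v,v\rangle_s^{1/2}$ for every $w\in\mathcal{O}$. In particular $d_s(e,\exp(t_0w))\le t_0\ell_s$ for all $w$, and it suffices to prove the reverse inequality, i.e.\ that $\beta_w|_{[0,t_0]}$ is minimizing for every $w\in\mathcal{O}$ once $s$ is close enough to $1$. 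This forces $d_s(e,\exp(t_0w))\equiv t_0\ell_s$, which is the desired constant displacement.

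I would prove the minimizing property by contradiction together with compactness. The inner products $\langle\cdot,\cdot\rangle_s$ depend smoothly on $s$ and converge to the bi-invariant $\langle\cdot,\cdot\rangle_1$ as $s\to1$; since $\mathrm{Sp}(2)$ is compact, the distances $d_s$ converge uniformly to $d_1$ and the geodesic flows, hence the exponential maps $\exp^s_e$, converge smoothly. Suppose the claim fails: there are $s_n\to1$ and $w_n\in\mathcal{O}$ such that $\beta_{w_n}$ is not $d_{s_n}$-minimizing on $[0,t_0]$. Passing to a subsequence, $w_n\to w_\infty\in\mathcal{O}$ and $q_n:=\exp(t_0w_n)\to q_\infty:=\exp(t_0w_\infty)$. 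Let $\sigma_n\colon[0,t_0]\to\mathrm{Sp}(2)$ be a constant-speed minimizing geodesic from $e$ to $q_n$ in $\langle\cdot,\cdot\rangle_{s_n}$, so $L_{s_n}(\sigma_n)=d_{s_n}(e,q_n)<t_0\ell_{s_n}$. Their initial velocities are uniformly bounded, so after a further subsequence $\sigma_n\to\sigma_\infty$ in $C^1$, a $d_1$-geodesic from $e$ to $q_\infty$ with
\begin{equation*}
L_1(\sigma_\infty)=\lim_n L_{s_n}(\sigma_n)=\lim_n d_{s_n}(e,q_n)=d_1(e,q_\infty)=t_0,
\end{equation*}
using uniform convergence of the distances and Lemma \ref{lemma-9}. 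Thus $\sigma_\infty$ is $d_1$-minimizing, and by the uniqueness in Lemma \ref{lemma-9} it coincides with $\beta_{w_\infty}$; in particular $\dot\sigma_n(0)\to\dot\sigma_\infty(0)=w_\infty$ while also $w_n\to w_\infty$.

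The heart of the matter, and the step I expect to be the main obstacle, is to turn this convergence into a contradiction via the absence of conjugate points. Since $\beta_{w_\infty}$ is the unique $d_1$-minimizer up to every time $<\pi$ (Lemma \ref{lemma-9}), a first conjugate point could not occur before $\pi$, so $\beta_{w_\infty}$ has no conjugate point on $[0,t_0]$ and $d(\exp^1_e)$ is invertible at $t_0w_\infty$. By the smooth dependence on $s$ and a quantitative inverse function theorem, there exist a neighborhood $U$ of $t_0w_\infty$ in $\mathfrak{sp}(2)$ and $\varepsilon>0$ such that $\exp^s_e$ is injective on $U$ whenever $|s-1|<\varepsilon$. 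For large $n$ both $t_0\,\dot\sigma_n(0)$ and $t_0w_n$ lie in $U$ and satisfy $\exp^{s_n}_e(t_0\dot\sigma_n(0))=q_n=\exp^{s_n}_e(t_0w_n)$, so injectivity forces $\dot\sigma_n(0)=w_n$, hence $\sigma_n=\beta_{w_n}$, contradicting $L_{s_n}(\sigma_n)<t_0\ell_{s_n}$. This shows $\beta_w|_{[0,t_0]}$ is minimizing for all $w\in\mathcal{O}$ once $s$ is near $1$, whence $d_s(g,\rho(g))\equiv t_0\ell_s$ and $\rho$ is a CW-translation. The genuine difficulties are the uniform (over $w\in\mathcal{O}$) control of conjugate points and of the injectivity of $\exp^s_e$ as $s\to1$; compactness of $\mathcal{O}$ and of $\mathrm{Sp}(2)$, together with the uniqueness supplied by Lemma \ref{lemma-9}, are precisely what make this uniformity available.
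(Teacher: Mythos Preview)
Your argument is correct and follows essentially the same route as the paper: reduce via left invariance to showing that the one-parameter subgroup $t\mapsto\exp(tw)$ is a minimizing geodesic on $[0,t_0]$ for every $w\in\mathrm{Ad}(\mathrm{Sp}(2))v$ when $s$ is near $1$, then argue by contradiction with sequences $s_n\to1$, pass to limits using compactness, and invoke the uniqueness in Lemma~\ref{lemma-9} together with injectivity of the Riemannian exponential to derive the contradiction. The only packaging difference is that the paper first fixes a star-shaped domain $\mathcal{D}\subset\mathfrak{sp}(2)$ containing the scaled orbit on which $\mathrm{Exp}_s$ is a diffeomorphism for all $s$ near $1$ (its Claim~A), whereas you localize the injectivity near the single limit point $t_0w_\infty$ via the inverse function theorem; both are valid and yield the same contradiction.
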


\begin{proof}We will only prove the case  $t_0\in(0,\pi)$, since the case $t_0\in(-\pi, 0)$ is similar, and the case $t_0=0$ is trivial. Denote by $\mathrm{Exp}_s:\mathfrak{sp}(2)=T_e\mathrm{Sp}(2)\rightarrow \mathrm{Sp}(2)$ the exponential map at $e\in \mathrm{Sp}(2)$, with respect to the metric $\langle\cdot,\cdot\rangle_s$. Then by the properties of the exponential map in Riemannian geometry, and  Lemma \ref{lemma-9}, there exists a positive smooth function $r(w)$ on the unit sphere $\mathcal{S}=\{w|
\langle w,w\rangle_1=1\}\subset\mathfrak{sp}(2)$ satisfying the following two requirements:
\begin{enumerate}
\item for any $w\in\mathcal{S}$, the curve $t\in[0,r(w)]\mapsto \exp(tw)$ is the unique unit speed shortest geodesic from $e$ to $\exp(r(w)w)$ on $(\mathrm{Sp}(2),\langle\cdot,\cdot\rangle_1)$;
\item for any $u\in\mathrm{Ad}(\mathrm{Sp}(2))v\subset\mathcal{S}$, we have $r(u)>t_0$.
\end{enumerate}

Obviously, $\mathrm{Exp}_1$ is a diffeomorphism from
the open disk $\mathcal{D}=\bigcup_{u\in\mathcal{S}_1}\{tu|\forall t\in[0,r(u))\}\subset\mathfrak{g}'$ to its image. This observation is still valid with the metric slightly perturbed. To be precise, we have the following claim, which is easy to prove.
\bigskip

\noindent{\bf Claim A}. If the parameter $s$ in $\langle\cdot,\cdot\rangle_s$
is sufficiently close to $1$, then
$\mathrm{Exp}_s$ is a diffeomorphism from $\mathcal{D}$ to its image $\mathrm{Exp}_s(\mathcal{D})\subset \mathrm{Sp}(2)$.
\bigskip


Next, we prove

\noindent{\bf Claim B}. If $s$ is sufficiently close to $1$, then for any $u\in\mathrm{Ad}(\mathrm{Sp}(2))v$, the curve
$c(t)=\exp tu$ with $t\in[0,t_0]$ is a shortest geodesic from $e$ to $\exp(t_0u)$ on $(\mathrm{Sp}(2),\langle\cdot,\cdot\rangle_s)$, and its   length is
$t_0\langle v,v\rangle_s{}^{1/2}$.

We have seen in Example \ref{example-1} that,
for the left invariant metric $\langle\cdot,\cdot\rangle_s$, the vector $v=\mathrm{diag}(\mathbf{i},\mathbf{i})\in\mathfrak{sp}(2)$ generates a right invariant CK-vector field on $\mathrm{Sp}(2)$.
Then by Lemma \ref{lemma-6}, the vector $u\in\mathrm{Ad}(\mathrm{Sp}(2))v$ also generates a right invariant CK-vector field. So $c(t)=\exp tu$,  $0\leq t\leq t_0$ is a geodesic of length $t_0\langle v,v\rangle_s^{1/2}$ on $(\mathrm{Sp}(2),\langle\cdot,\cdot\rangle_s)$.

Now we prove that the curve $c(t)$,  $0\leq t\leq t_0$ is shortest when $s$ is sufficiently close to $1$.
Assume conversely that this is not true. Then there exists a sequence of positive numbers $s_n$ approaching $1$, a sequence $u_n$ with $(t_0\langle v,v\rangle_{s_n}^{1/2})^{-1}{u_n}\in\mathrm{Ad}(\mathrm{Sp}(2))v$,
and a sequence $w_n\in\mathfrak{sp}(2)$ with $\langle w_n,w_n\rangle_{s_n}
< \langle u_n,u_n\rangle_{s_n}=t_0^2\langle v,v\rangle_{s_n}$,
such that $\mathrm{Exp}_s (w_n)=\mathrm{Exp}_s(u_n)=\exp u_n$. Obviously, for any $n$, $u_n$ and $w_n$ are different.

Since $\mathop{\lim}\limits_{n\rightarrow\infty}\langle v,v\rangle_{s_n}=1$, the sequences $\{u_n\}$ and $\{w_n\}$ are both bounded. Replacing these sequences with suitable subsequences if necessary,  we can assume that $u_n$ and $w_n$ converge to $u_0$ and $w_0$, respectively. By the continuity, we have $t_0^{-1}{u_0}\in\mathrm{Ad}(\mathrm{Sp}(2))v$,
$\langle w_0,w_0\rangle_1\leq t_0^2$ and $\exp( u_0)=\mathrm{Exp}_1(u_0)=\mathrm{Exp}_1(w_0)=\exp(w_0)$. Then Lemma \ref{lemma-9} provides $u_0=w_0$. Since $t_0<r(t_0^{-1}u_0)$,
the  continuity implies that $\langle u_n,u_n\rangle_1^{1/2}<r(u_n)$ and $\langle w_n,w_n\rangle_1^{1/2}<r(w_n)$ for sufficiently large $n$. So when $n$ is sufficiently large, $s_n$ is sufficiently close to 1, and the pair $\{u_n,w_n\}$ are
two different points in $\mathcal{D}$ which are mapped by $\mathrm{Exp}_{s_n}$ to the same point. This contradicts Claim A.
The proof of Claim B is finished.

To summarize, Claim B implies that, when $s$ is sufficiently close to 1, the distance from $e$ to $\exp(t_0\mathrm{Ad}(g^{-1})v)=g^{-1}\exp(t_0v)g$ is a constant, for all  $g\in(\mathrm{Sp}(2),\langle\cdot,\cdot\rangle_s)$. Then by the left invariance of the metric, the distance from $g$ to $\exp(t_0v)g$ is also a constant function. So
$\rho$ is a CW-translation on $(\mathrm{Sp}(2),\langle\cdot,\cdot\rangle_s)$, when $s$ is sufficiently close to $1$.
\end{proof}

Applying Lemma \ref{lemma-12} repeatedly, we can make the left translation of each element in $\Gamma$ a CW-translation on $(\mathrm{Sp}(2),\langle\cdot,\cdot\rangle_s)$
when $s\neq1$ is sufficiently close to 1. Then using Lemma \ref{lemma-11}, we get the first counter example to the Riemannian homogeneity conjecture, which can be summarized as follows.

\begin{theorem}\label{thm-2}
Let $n$ be a positive integer, and $\Gamma$ the subgroup $\mathbb{Z}_{2n+1}$ of $\mathrm{Sp}(2)$ generated
by $\mathrm{diag}(\exp(\tfrac{\pi}{2n+1}\mathbf{i}),\exp(\tfrac{\pi}{2n+1})\mathbf{i})$.
Then there exist
a smooth family of left invariant Riemannian metrics $\langle\cdot,\cdot\rangle_s$, $s>0$ on $\mathrm{Sp}(2)$ such that
\begin{enumerate}
\item the metric $\langle\cdot,\cdot\rangle_1$ is bi-invariant;
\item when $s$ is sufficiently close to $1$, the left translation  defined by any  element in $\Gamma$ is a CW-translation of  $(\mathrm{Sp}(2),\langle\cdot,\cdot\rangle_s)$;
\item when $s\neq 1$, the Riemannian quotient $(\Gamma\backslash \mathrm{Sp}(2),\langle\cdot,\cdot\rangle_s)$ is not homogeneous.
\end{enumerate}
\end{theorem}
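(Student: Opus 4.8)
The plan is to assemble the theorem from the three ingredients already established, since Theorem \ref{thm-2} is essentially a repackaging of Example \ref{example-1}, Lemma \ref{lemma-11} and Lemma \ref{lemma-12}. I take the smooth family of left invariant metrics $\langle\cdot,\cdot\rangle_s$, $s>0$, on $\mathrm{Sp}(2)$ furnished by Example \ref{example-1}, in which $v=\mathrm{diag}(\mathbf{i},\mathbf{i})=e_1$ generates a right invariant CK-vector field for every $s$. Statement (1) is then immediate: by construction $\langle\cdot,\cdot\rangle_1$ is the restriction of $\langle\cdot,\cdot\rangle_{\mathrm{bi}}$ to $\mathfrak{sp}(2)$, hence bi-invariant, and Example \ref{example-1} already records that $\langle\cdot,\cdot\rangle_s$ fails to be bi-invariant precisely when $s\neq 1$. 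Statement (3) is likewise immediate: for $s\neq 1$, Lemma \ref{lemma-11} gives that the Riemannian quotient $(\Gamma\backslash\mathrm{Sp}(2),\langle\cdot,\cdot\rangle_s)$ is not homogeneous.

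The genuine content therefore lies in statement (2), and the point requiring care is the uniformity over all of $\Gamma$. First I would list the elements of $\Gamma=\mathbb{Z}_{2n+1}$ explicitly as
$$\exp\!\big(\tfrac{k\pi}{2n+1}v\big)=\mathrm{diag}\big(\exp(\tfrac{k\pi}{2n+1}\mathbf{i}),\exp(\tfrac{k\pi}{2n+1}\mathbf{i})\big),\qquad k=-n,\dots,n.$$
For each $k$ the exponent $t_0=\tfrac{k\pi}{2n+1}$ lies in $(-\pi,\pi)$, since $|k|\le n$ forces $|t_0|\le \tfrac{n\pi}{2n+1}<\tfrac{\pi}{2}$. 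Hence Lemma \ref{lemma-12} applies to each nontrivial element and provides, for each such $k$, an open neighborhood $U_k$ of $1$ in $(0,\infty)$ such that the left translation $L(\exp(\tfrac{k\pi}{2n+1}v))$ is a CW-translation of $(\mathrm{Sp}(2),\langle\cdot,\cdot\rangle_s)$ whenever $s\in U_k$. The identity element ($k=0$) is trivially a CW-translation for every $s$.

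The main (modest) obstacle is that the threshold hidden in the phrase ``sufficiently close to $1$'' in Lemma \ref{lemma-12} depends on $t_0$, so a single value of $s$ need not serve every element of $\Gamma$ at once. Here I would exploit the finiteness of $\Gamma$: since there are only $2n+1$ elements, the intersection $U=\bigcap_{k=-n}^{n}U_k$ is again an open neighborhood of $1$ in $(0,\infty)$, and for every $s\in U$ the left translation of each element of $\Gamma$ is \emph{simultaneously} a CW-translation of $(\mathrm{Sp}(2),\langle\cdot,\cdot\rangle_s)$. This is exactly assertion (2), and together with (1) and (3) it completes the proof of the theorem.

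Finally, to exhibit the counterexample one chooses any $s\in U$ with $s\neq 1$: by (2) every element of $\Gamma$ acts by a CW-translation, while by (3) the quotient $(\Gamma\backslash\mathrm{Sp}(2),\langle\cdot,\cdot\rangle_s)$ is not homogeneous, so the converse of Proposition \ref{prop-1} fails. The remaining hypotheses of the Homogeneity Conjecture hold automatically, since $\Gamma$ is a finite cyclic subgroup of the one-parameter group $\exp(\mathbb{R}v)$ acting by left translations without fixed points, so that $\pi:\mathrm{Sp}(2)\to\Gamma\backslash\mathrm{Sp}(2)$ is a Riemannian covering.
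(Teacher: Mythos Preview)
Your proposal is correct and follows essentially the same approach as the paper: take the family $\langle\cdot,\cdot\rangle_s$ from Example~\ref{example-1}, invoke Lemma~\ref{lemma-11} for part~(3), and apply Lemma~\ref{lemma-12} to each nonidentity element of $\Gamma$, using the finiteness of $\Gamma$ to intersect the resulting neighborhoods of $1$ and obtain a single threshold that works for all elements simultaneously. The paper compresses this last step into the phrase ``applying Lemma~\ref{lemma-12} repeatedly,'' which is exactly what your finite-intersection argument spells out.
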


\bigskip


\end{document}